\numberwithin{equation}{section}
\newcommand{\CP}{(\!\textit{CP})}
\DeclareMathOperator{\ospan}{span}
\newcommand{\1}{\mathds{1}}
\newcommand{\R}{\mathds{R}}
\newcommand{\C}{\mathds{C}}
\newcommand{\N}{\mathds{N}}
\newcommand{\A}{\mathcal{A}}
\newcommand{\D}{\mathcal{D}}
\renewcommand{\L}{\mathcal{L}}
\renewcommand{\d}{\mathrm{d}}
\renewcommand{\Re}{\operatorname{Re}}
\newcommand{\fra}{\mathfrak{a}}
\def\me{\mathsf{e}}
\def\mv{\mathsf{v}}
\def\diag{\mathrm{diag}}
\newcommand{\abs}[1]{\lvert #1 \rvert}
\newcommand\norm[1]{\lVert #1 \rVert} 
\newcommand\scp[2]{( #1, #2 )}
\newcommand\dual[2]{\langle #1, #2 \rangle}
\theoremstyle{plain}
\newtheorem{theorem}{Theorem}[section]
\newtheorem{proposition}[theorem]{Proposition}
\newtheorem{lemma}[theorem]{Lemma}
\newtheorem{corollary}[theorem]{Corollary}
\theoremstyle{definition}
\newtheorem{definition}[theorem]{Definition}
\theoremstyle{remark}
\newtheorem{remark}[theorem]{Remark}
\title{Diffusion in networks with time-dependent transmission conditions}
\author{
 	Wolfgang Arendt,
 	Dominik Dier,
	Marjeta Kramar Fijav\v{z}\thanks{Corresponding author}}
\begin{document}
\maketitle
\begin{abstract}
We study diffusion in a network which is governed by non-au\-tono\-mous Kirchhoff conditions at the vertices  of the graph. Also the diffusion coefficients may depend on time. We prove at first a result on existence and uniqueness using form methods. Our main results concern the long-term behavior of the solution. In the case when the conductivity and the diffusion coefficients match (so that mass is conserved) we show that the solution converges exponentially fast to an equilibrium. We also show convergence to a special solution in some other cases.

\medskip
\noindent{\bf Keywords:} time-dependent networks, diffusion, non-autonomous evolution equations, sesquilinear forms, asymptotic behavior. 
\smallskip

\noindent {\bf MSC2010:} 35R02, 35K51, 47D06, 47A07.
\end{abstract}

\section{Introduction}
The aim of this paper is to study a non-autonomous dynamical system in a network subject to non-autonomous Kirchhoff knot-conditions at the vertices (knots). To be more precise we consider a finite, simple, connected graph $G$ with $m$ edges $\me _1,\dots,\me _m$ and $n$ vertices $\mv_1,\dots,\mv_n$. We study a diffusion system of the form 
\begin{equation}\label{diff-sys}
\frac{\partial u_j}{\partial t}(t,x)=c_j(t) \frac{\partial^2u_j}{\partial x^2}(t,x)+F_j(t,x),
\end{equation}
$j=1,\dots,m$, $t>0$, $x\in(0,1)$.
Here we think of $u_j(t,x)$ as a function defined on the $j$-th edge $\me_j$ and impose that, for each $t\ge 0$,  $\left(u_1(t,\cdot),\dots,u_m(t,\cdot)\right) $ is continuous on the graph, i.e.~that the values of $u_j(t,\cdot)$ are compatible at the vertices. We further impose Kirchhoff conditions at the vertices
\begin{equation}\label{diff-sys-boundary}
\sum_{j=1}^m \phi_{ij} \mu_j(t) \frac{\partial u_j}{\partial x}(t,\mv_i) = 0.
\end{equation}
Here $\Phi=(\phi_{ij})$ is the incidence matrix of the graph and $\mu_j(t)$ are given positive conductivity factors. Finally we impose an initial value condition
\begin{equation}
u(0,x)=u^0(x).
\end{equation}
We will show well-posedness (Theorem~\ref{network-wp}) and study the asymptotic behavior of the solution as time tends to infinity. 
For example, if $\sum_{j=1}^m \int_0^1 F_j(t) \, \d{t} = 0$ and $c_j = \mu_j$, then we show that the solution converges to an equilibrium (Theorem~\ref{thm:stability}).

Such evolutionary systems on networks have been studied by many authors, in particular in the autonomous case we refer to the monograph \cite{LLS94} and the proceedings \cite{ABN01}.
Concerning the non-autonomous case we mention the work by von Below \cite{Be88}. Here we use a different strategy than von Below, namely we use form methods to prove well-posedness. In the autonomous case this form method allowed in \cite{KMS07} to establish a holomorphic semigroup, and the asymptotic behavior of the solutions was studied with the help of spectral theory.
There are results on existence and uniqueness for non-autonomous forms, much developed by J.~L.~Lions, but the problem is to obtain solutions with sufficient regularity to identify the knot-conditions at the vertices.
This is indeed possible with the help of a recent result \cite{ADLO12} which we use here.
We need to assume that the conductivity factors $\mu_j(t)$ are Lipschitz continuous in time. But then we obtain a unique solution for each initial value $u^0\in V$ which is continuous from $\R_+$ with values in $V$ where 
\[V=\{f\in H^1(0,1)^m : f \text{ is continuous on the graph}\}.\]

For studying the long-term behavior of the solution we need at first to show that the eigenvalues depend continuously on time. This is done with the help of a criterion which is known to specialists but  is possibly nowhere  formulated explicitly. We give precise information in the Appendix.

This paper seems to be among the first studies of asymptotic behavior for non-autonomous evolution equation in a network.
A recent manuscript  \cite{BDK13} treats a first order transport equation on the edges of a network with time-varying transmission conditions at the vertices using the approach of difference equations and evolution families. However, the nature of our problem and the techniques we use are completely different.

 It is natural that the asymptotic behavior depends on conditions on the coefficients. We consider several cases. If the diffusion coefficients $c_j(t)$ and the conductivity coefficients $\mu_j(t)$ are equal, then mass is preserved. We show in this case that the solution converges exponentially fast to an equilibrium (Section \ref{section:stab1}). As a second case we assume that $b_j(t):=\frac{\mu_j(t)}{c_j(t)}$ satisfies a monotonicity condition (Section \ref{section:stab2}). Then the solution converges exponentially fast to a special solution. Finally, if $\dot{b}_j(t)\le c b_j(t)$ (Section \ref{section:stab3}), then we can still prove exponential convergence, but we have less information on the limit solution.

The paper is organized as follows. Section \ref{section:forms} is devoted to abstract non-autonomous forms. We give the well-posedness result (Theorem \ref{wp}) which we use later and prove continuity of eigenvalues as functions of $t$. In Section  \ref{section:diffusion_on_network} we explicitly describe our network diffusion problem and establish the well-posedness. 
In Section \ref{section:positivity} we investigate when the solutions are positive.
The qualitative behavior of solutions for $t \to \infty$ is studied in Section~\ref{section:stability}. Finally, we show in the Appendix that the $k$-th eigenvalue of a self-adjoint operator is continuous with respect to convergence in the resolvent sense.

\section{Preliminaries: Forms and associated operators}\label{section:forms}

In this preliminary section we introduce the abstract framework. 
Let $V$ and $H$ be two separable real Hilbert spaces such that $V\underset d \hookrightarrow H$, i.e.\ $V$ is continuously and densely embedded in $H$.

We denote the scalar products and norms of $H$ and $V$ by $\scp{\cdot}{\cdot}_H$, $\norm{\cdot}_H$ and $\scp{\cdot}{\cdot}_V$, $\norm\cdot_V$, respectively.
Let $\fra \colon V \times V \to \R$ be a bilinear and \emph{continuous} mapping, i.e.
\begin{equation}\label{eq:a_continuous}
    \abs{\fra(u,v)} \le M \norm u _V \norm v _V \quad (u,v \in V)
\end{equation}
for some constant $M\ge 0$. We assume that $\fra$ is \emph{$H$-elliptic},
i.e.\ there exist constants $\alpha >0$ and $\omega \in \R$ such that
\begin{equation}\label{eq:H-elliptic}
    \Re \fra(u,u) + \omega \norm u_H^2 \ge \alpha \norm u _V^2 \quad (u \in V).
\end{equation}
If $\omega=0$, we say that  the form $\fra$ is  \emph{coercive}. The
operator $\A \in \L(V,V')$ \emph{associated with the form} $\fra$ is defined by
\[
\dual{\A u}{v} = \fra(u,v) \quad (u,v \in V).
\]
Here $V'$ denotes the dual space of $V$ and $\dual\cdot\cdot$ denotes the duality between $V'$ and $V$.
As usual, we identify $H$ with a dense subspace of $V'$ (associating every $f \in H$ with the linear form $v \mapsto ( f, v )_H$).
Then $V'$ is a Hilbert space for a suitable scalar product.

Seen as an unbounded operator on $V'$ with domain $D(\A) = V$ the
operator $- \A$ generates a holomorphic semigroup on $V'$. 
The semigroup is bounded on a sector if $\omega=0$, in which case $\A$ is an isomorphism. 
Denote by $A$ the part of $\A$ on $H$, i.e.
\begin{align*}
    D(A) := {}& \{ u\in V : \A u \in H \},\\
    A u := {}& \A u.
\end{align*}
Note that in this case we have
\begin{align*}
 D(A) &= \{u\in V : \exists h\in H \text{ s.t. } \fra(u,v) = (h,v)_H \text{ for all }v\in V\},\\
Au&=  h.
\end{align*}
Then the operator $-A$ generates a holomorphic $C_0$-semigroup on $H$ (the
restriction of the semigroup generated by $-\A$ to $H$). 
We call $A$ the operator \emph{associated with $\fra$ on $H$}.
Note that by the Lax-Milgram Theorem $(\lambda + A)\colon D(A) \to H$ is bijective and $(\lambda+A)^{-1} \in \L(H)$ for all $\lambda \ge \omega$.
For more information, see e.g.\ the monographs 
\cite[Chap.\ 1]{Ouh05} or \cite[Chap.\ 2]{Tan79}. 

We now consider a symmetric form $\fra$, i.e.\ we suppose  that $\fra(u,v)=\fra(v,u)$ for all $u,v \in V$. 
The following estimates follow from the $H$-ellipticity of $\fra$.
\begin{proposition}\label{proposition:resolvent_bounds}
	Assume that $\fra$ is a symmetric, continuous, and $H$-elliptic bilinear form. 
	Let $\A$ be the operator  associated with $\fra$ and let $\omega$, $\alpha$ be the constants from $\eqref{eq:H-elliptic}$.
	Then 
	\begin{enumerate}
		\item[{\rm (i)}] $\norm{(\omega+\A)^{-1}}_{\L(V',V)} \le \frac 1 \alpha$,
		\item[{\rm (ii)}] $\norm{(\omega+\A)^{-1}}_{\L(V',H)} \le \frac {c_H} \alpha$,
		\item[{\rm (iii)}] $\norm{(\omega+\A)^{-1}}_{\L(H,V)} \le \frac {c_{V'}} \alpha$,
	\end{enumerate}
	where $c_H$ and $c_{V'}$ are the operator norms of the embeddings $V \hookrightarrow H$ and $H \hookrightarrow V'$, respectively.
\end{proposition}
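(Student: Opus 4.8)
The plan is to reduce all three bounds to a single coercivity estimate for the shifted operator. I would set $\frb(u,v) := \fra(u,v) + \omega\scp{u}{v}_H$; by \eqref{eq:H-elliptic} this form is symmetric, continuous and \emph{coercive}, with $\frb(u,u) \ge \alpha\norm{u}_V^2$, and its associated operator on $V'$ is exactly $\omega + \A$. Indeed, under the identification of $H$ with a subspace of $V'$ one has $\dual{(\omega+\A)u}{v} = \fra(u,v) + \omega\scp{u}{v}_H = \frb(u,v)$ for all $v\in V$. By the Lax--Milgram theorem, $\omega + \A \colon V \to V'$ is an isomorphism, so $(\omega+\A)^{-1}$ is well defined on all of $V'$.

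For (i), I would fix $f \in V'$ and put $u := (\omega+\A)^{-1}f \in V$, so that $\frb(u,v) = \dual{f}{v}$ for every $v \in V$. Testing with $v = u$ and using coercivity gives
\[ \alpha \norm{u}_V^2 \le \frb(u,u) = \dual{f}{u} \le \norm{f}_{V'}\norm{u}_V, \]
hence $\norm{u}_V \le \tfrac1\alpha \norm{f}_{V'}$, which is precisely (i).

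The remaining two bounds are then immediate consequences of (i) together with the embedding norms. For (ii), the same $u$ satisfies $\norm{u}_H \le c_H\norm{u}_V \le \tfrac{c_H}{\alpha}\norm{f}_{V'}$. For (iii), I would specialize $f$ to lie in $H \subset V'$ and use $\norm{f}_{V'} \le c_{V'}\norm{f}_H$, so that $\norm{u}_V \le \tfrac1\alpha\norm{f}_{V'} \le \tfrac{c_{V'}}{\alpha}\norm{f}_H$.

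I do not expect a genuine analytic obstacle here: the single testing inequality does all the work, and symmetry is not even needed for it (it is merely inherited from the standing hypothesis). The only point requiring care is the bookkeeping between the three spaces $V$, $H$, $V'$ --- in particular verifying that $\omega+\A$ really is the operator associated with $\frb$ under the chosen identification of $H$ with a subspace of $V'$, and that the pairing $\dual{f}{u}$ is controlled by $\norm{f}_{V'}\norm{u}_V$, so that the coercivity estimate closes.
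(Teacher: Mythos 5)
Your argument is correct and is exactly the one the paper has in mind: the paper omits the proof, remarking only that the estimates ``follow from the $H$-ellipticity of $\fra$,'' and your single testing inequality $\alpha\norm{u}_V^2 \le \frb(u,u) = \dual{f}{u} \le \norm{f}_{V'}\norm{u}_V$ for the coercive shifted form, combined with the embedding norms $c_H$ and $c_{V'}$, is precisely that. Your side remark that symmetry is not needed is also accurate.
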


If the embedding of $V$ in $H$ is even compact, we write $V \underset c \hookrightarrow H$,
then the symmetric form $\fra$ can be diagonalized in the following way.

\begin{theorem}[Spectral representation of symmetric forms]\label{theorem:spectral_representation}
	Let $H$ and $V$ be infinite dimensional Hilbert spaces with $V \underset c \hookrightarrow H$. 
	Let $\fra \colon V \times V \to \R$ be a symmetric, continuous and $H$-elliptic bilinear form.
	Then there exist an orthonormal basis $\{ e_n : n \in \N \}$ of $H$ 
	and $\lambda_n \in \R$ with $\lambda_1 \le \lambda_2 \le \dots$, $\lim_{n \to \infty} \lambda_n = \infty$, such that
	\begin{align*}
		V &= \Big\{ u\in H : \sum_{n=1}^\infty \lambda_n \scp{u}{e_n}^2_H < \infty \Big\},\\
		\fra(u,v) &= \sum_{n=1}^\infty \lambda_n \scp{u}{e_n}_H \scp{v}{e_n}_H \quad (u,v \in V),\\
		\A u &= \sum_{n=1}^\infty \lambda_n \scp{u}{e_n}_H e_n \quad (u \in V),\\
		D(A) &= \Big\{ u\in H : \sum_{n=1}^\infty \lambda_n^2 \scp{u}{e_n}^2_H < \infty \Big\},\\
		Au &= \sum_{n=1}^\infty \lambda_n \scp{u}{e_n}_H e_n \quad (u \in D(A)).
	\end{align*}
\end{theorem}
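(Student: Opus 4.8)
The plan is to reduce the statement to the classical spectral theorem for a compact, self-adjoint operator. First I would remove the shift by setting $\frb(u,v) := \fra(u,v) + \omega\scp{u}{v}_H$, where $\omega$ is the ellipticity constant from \eqref{eq:H-elliptic}. Then $\frb$ is symmetric, continuous and coercive, with $\frb(u,u) \ge \alpha\norm{u}_V^2$, so $\frb(\cdot,\cdot)$ is an inner product on $V$ whose norm is equivalent to $\norm{\cdot}_V$. The operator associated with $\frb$ on $H$ is $B := \omega + A$, which is self-adjoint because $\frb$ is symmetric and satisfies $B \ge \alpha c_H^{-2} > 0$ (since $\frb(u,u) = (Bu,u)_H \ge \alpha\norm{u}_V^2 \ge \alpha c_H^{-2}\norm{u}_H^2$), where $c_H$ is the norm of the embedding $V \hookrightarrow H$. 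In particular $B^{-1} \in \L(H)$ exists by Lax--Milgram.

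The key step is the compactness of $B^{-1}$. By Proposition~\ref{proposition:resolvent_bounds}(iii) the operator $B^{-1} = (\omega+\A)^{-1}$ maps $H$ boundedly into $V$, and since $V \underset c\hookrightarrow H$ the composition $B^{-1}\colon H \to V \hookrightarrow H$ is compact. Being moreover self-adjoint and positive, $B^{-1}$ admits, by the spectral theorem for compact self-adjoint operators, an orthonormal basis $\{e_n : n\in\N\}$ of $H$ consisting of eigenvectors, with eigenvalues $\mu_1 \ge \mu_2 \ge \dots > 0$ and $\mu_n \to 0$. Setting $\lambda_n := \mu_n^{-1} - \omega$ yields $\lambda_1 \le \lambda_2 \le \dots$ with $\lambda_n \to \infty$; moreover $e_n = \mu_n B^{-1} e_n \in D(B) = D(A) \subset V$ and $A e_n = \lambda_n e_n$.

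It remains to verify the series formulas, which I would obtain by transferring Parseval's identity from $H$ to $V$. A direct computation gives $\frb(e_n,e_m) = (B e_n, e_m)_H = \mu_n^{-1}\delta_{nm}$, so $\{\mu_n^{1/2} e_n\}$ is $\frb$-orthonormal; and if $u\in V$ is $\frb$-orthogonal to every $e_n$, then $\mu_n^{-1}\scp{u}{e_n}_H = \frb(u,e_n) = 0$ forces $u = 0$, whence $\{\mu_n^{1/2} e_n\}$ is in fact an orthonormal basis of $(V,\frb)$. Parseval then gives, for $u,v\in V$, the identity $\frb(u,v) = \sum_n \mu_n^{-1}\scp{u}{e_n}_H\scp{v}{e_n}_H$, so that $u\in V$ if and only if $\sum_n \lambda_n \scp{u}{e_n}_H^2 < \infty$ (the two series differing by the summable term $\omega\norm{u}_H^2$), and $\fra(u,v) = \frb(u,v) - \omega\scp{u}{v}_H = \sum_n \lambda_n\scp{u}{e_n}_H\scp{v}{e_n}_H$. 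The expansion $\A u = \sum_n \lambda_n \scp{u}{e_n}_H e_n$ in $V'$, the description $D(A) = \{u\in H : \sum_n \lambda_n^2\scp{u}{e_n}_H^2 < \infty\}$, and $A u = \sum_n \lambda_n\scp{u}{e_n}_H e_n$ then follow from the definitions of $\A$ and of its part $A$ on $H$.

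The main obstacle is the compactness step: everything downstream is the standard diagonalization, but it rests on combining the resolvent bound of Proposition~\ref{proposition:resolvent_bounds}(iii) with the compactness of the embedding $V \underset c\hookrightarrow H$. I would also take care that the shift by $\omega$ is handled consistently throughout, since $\lambda_n$ may be negative for finitely many indices while $\mu_n^{-1} = \lambda_n + \omega$ remains positive.
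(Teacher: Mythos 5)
Your proposal is correct and follows exactly the route the paper indicates: the paper gives no detailed proof but simply declares the theorem ``a direct consequence of the spectral theorem for compact, self-adjoint operators'' (citing \cite{AU10}), and your argument---shifting by $\omega$ to get a coercive form, using Proposition~\ref{proposition:resolvent_bounds}(iii) together with $V \underset c\hookrightarrow H$ to see that $(\omega+A)^{-1}$ is compact, diagonalizing, and transferring Parseval from $H$ to $(V,\frb)$---is precisely the standard reduction being invoked. The details you supply (the $\frb$-orthonormal basis $\{\mu_n^{1/2}e_n\}$, the handling of the finitely many possibly negative $\lambda_n$) are all sound.
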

\noindent 
This is a direct consequence of the spectral theorem for compact, self-adjoint operators (cf.\ \cite[Satz 4.40 and 4.43]{AU10}).
The orthonormal basis in Theorem~\ref{theorem:spectral_representation} is not unique, of course.
But if  $u\in D(A)$ is an eigenvector of $A$, i.e.\ $Au=\lambda u$ for some $\lambda \in\R$,
then $u \in \ospan \{ e_n : \lambda_n = \lambda \} = \ker(A-\lambda)$.
Thus $\lambda_n$ is in fact the $n$-th eigenvalue of $A$ counting multiplicity.

Next we extend our framework to non-autonomous problems. Let $V$, $H$ be Hilbert spaces with $V \underset d \hookrightarrow H$.
\begin{definition}\label{lip-form}
A \emph{Lipschitz continuous (non-autonomous) closed form} is
a function $\fra \colon [0, \tau] \times V\times V \to \R$, where
    \begin{enumerate}[label={\alph*)}]
        \item $\fra(t, \cdot, \cdot)\colon V\times V \to \R$ is bilinear for all $t \in [0,\tau]$,
        \item there exists $\dot M \ge 0$ with 
		\[
			\abs{\fra(t,u,v)- \fra(s,u,v)} \le \dot M \abs{t-s} \norm u_V \norm v_V \quad (t,s \in [0,\tau],\ u,v \in V),
		\]
        \item there exists $M \ge 0$ with 
		\[
			\abs{\fra(t,u,v)} \le M \norm u_V \norm v_V \quad (t \in [0,\tau],\ u,v \in V),
		\]
        \item there exist $\alpha > 0$ and $\omega \in \R$ with 
		\[
			\Re \fra(t,u,u) +\omega \norm u_H^2 \ge \alpha \norm u_V^2 \quad (t \in [0,\tau], u\in V).
		\]
    \end{enumerate}
\end{definition}	
For the remaining section let $\fra$ be a symmetric non-autonomous Lipschitz continuous closed form. For
$t \in [0,\tau]$ we denote by $\A(t)$ the operator associated  with the form $\fra(t,\cdot,\cdot)$ and by $A(t)$ the part of $\A(t)$ in $H$.
We will need the following result from \cite{ADLO12}. 
	
	\begin{theorem}{\cite[Theorem 5.1]{ADLO12}}\label{wp}
	Let $B\colon [0,\tau] \to \L(H)$ be strongly measurable such that
	\begin{equation*}
    		\beta \norm u_H^2 \le \Re ({B(t)u} , u)_H \le \beta^{-1} \norm u _H^2 \quad (t \in [0,\tau],\ u \in H)
	\end{equation*}
	for a constant $1 >\beta > 0$.
    Let ${u}^0 \in V$, $f \in L^2(0,\tau;H)$.
    Then there exists a unique
    \[
        u \in H^1(0,\tau;H)\cap L^2(0,\tau;V)
    \]
    satisfying
    \begin{align*}
        B(t) \dot u(t) + A(t)u(t)  &= f(t) \quad \text{a.e.}\\
                        u(0)    &={u}^0.
    \end{align*}
    Moreover, $u \in C([0,\tau];V)$ and
    \begin{equation*}
    	\norm u_{L^2(0,\tau, V)}^2 + \norm{\dot u}_{L^2(0,\tau, H)}^2 \le C \Big[ \norm{{u}^0}^2_V + \norm f_{L^2(0,\tau;H)}^2 \Big],
    \end{equation*}
    where the constant $C$ depends merely on $\beta, M, \alpha, \tau$ and $\dot M$.
\end{theorem}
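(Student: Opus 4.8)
The plan is to construct the solution by a Galerkin approximation for which the decisive energy estimate is rigorous, and then to pass to the limit; uniqueness and the a priori bound come out of the same estimate. The formal computation behind everything is the test of $B(t)\dot u(t)+A(t)u(t)=f(t)$ against $\dot u(t)$ in $H$, which reads
\[
(B(t)\dot u(t),\dot u(t))_H + \fra(t,u(t),\dot u(t)) = (f(t),\dot u(t))_H ,
\]
using $\dual{A(t)u(t)}{\dot u(t)}=\fra(t,u(t),\dot u(t))$. Here the symmetry of $\fra$ enters decisively: writing $g(t):=\fra(t,u(t),u(t))$ and differentiating, the two spatial contributions coincide, so $\tfrac12\tfrac{\d}{\d t}g(t)=\fra(t,u(t),\dot u(t))+\tfrac12\dot{\fra}(t,u(t),u(t))$, where $\dot{\fra}$ is the (bounded, by b)) time-derivative of the form. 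Substituting gives the pointwise energy identity
\[
(B(t)\dot u,\dot u)_H + \tfrac12\tfrac{\d}{\d t}\,\fra(t,u,u) = (f,\dot u)_H + \tfrac12\dot{\fra}(t,u,u).
\]

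From this I would extract the a priori bound. Coercivity of $B$ gives $(B(t)\dot u,\dot u)_H\ge\beta\norm{\dot u}_H^2$; Young's inequality absorbs $(f,\dot u)_H$ into $\tfrac{\beta}{2}\norm{\dot u}_H^2+\tfrac1{2\beta}\norm f_H^2$; the Lipschitz property b) bounds $\abs{\dot{\fra}(t,u,u)}\le\dot M\norm u_V^2$; and setting $E(t):=\fra(t,u,u)+\omega\norm{u}_H^2$ the $H$-ellipticity d) yields $E(t)\ge\alpha\norm{u(t)}_V^2$, so $\norm u_V^2\le\alpha^{-1}E$. Collecting terms and handling the extra contribution $\omega\tfrac{\d}{\d t}\norm u_H^2=2\omega(\dot u,u)_H$ by Young leads to a differential inequality $\tfrac{\d}{\d t}E(t)\le C_1\norm{f(t)}_H^2+C_2E(t)$ together with a pointwise lower control of $\tfrac{\beta}{2}\norm{\dot u}_H^2$. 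Since $E(0)\le C\norm{u^0}_V^2$ by c), Gronwall's lemma yields $\sup_{t}\norm{u(t)}_V^2+\norm{\dot u}_{L^2(0,\tau;H)}^2\le C\big[\norm{u^0}_V^2+\norm f_{L^2(0,\tau;H)}^2\big]$ with $C=C(\beta,M,\alpha,\tau,\dot M)$, which is exactly the claimed estimate; in particular $\dot u\in L^2(0,\tau;H)$, i.e.\ $u\in H^1(0,\tau;H)$.

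To make this rigorous I would run it on a Galerkin scheme. Choose finite-dimensional subspaces $V_n\subset V$ increasing to a dense subspace of $V$ (e.g.\ spanned by a fixed orthonormal-in-$H$ basis $w_1,w_2,\dots$ of $V$) and solve, for $u_n(t)\in V_n$, the linear system $(B(t)\dot u_n,w_k)_H+\fra(t,u_n,w_k)=(f,w_k)_H$, $u_n(0)=P_nu^0$. Solvability on all of $[0,\tau]$ follows from Carath\'eodory theory, the matrix $\big((B(t)w_j,w_k)_H\big)_{j,k}$ being invertible by coercivity of $B$. Since $u_n,\dot u_n\in V_n\subset V$, testing with $\dot u_n$ is legitimate and the energy estimate above holds uniformly in $n$. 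I then extract $u_n\rightharpoonup u$ in $L^2(0,\tau;V)$ and $\dot u_n\rightharpoonup\dot u$ in $L^2(0,\tau;H)$, pass to the limit in the weak formulation; since the right-hand side $f-B\dot u$ lies in $H$, the resulting identity $\A(t)u(t)=f(t)-B(t)\dot u(t)$ forces $u(t)\in D(A(t))$ and the equation to hold in $H$ for a.e.\ $t$, with $u(0)=u^0$. Uniqueness is immediate from the a priori estimate applied to the difference of two solutions (with $f=0$, $u^0=0$). Finally $u\in C([0,\tau];V)$ follows because the energy identity shows $t\mapsto\fra(t,u(t),u(t))$, hence $t\mapsto\norm{u(t)}_V$, is continuous, while $u$ is weakly continuous into $V$; norm- plus weak-convergence give strong continuity.

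The main obstacle is exactly the legitimacy of the energy estimate: the natural test function $\dot u$ lies in $H\hookrightarrow V'$ but not in $V$, so the pairing $\dual{A(t)u(t)}{\dot u(t)}$ is a priori meaningless. This is precisely the regularity gap separating the elementary Lions solution in $V'$ from maximal regularity in $H$, and the Galerkin truncation is what closes it. Within that scheme the delicate points are the uniform-in-$n$ bound on $\norm{u_n(t)}_V$ — which genuinely relies on symmetry, since for a non-symmetric form $\fra(t,\dot u,u)$ cannot be rewritten as a time-derivative and the estimate can in fact fail — and the verification that the weak limit satisfies the equation in $H$ rather than only in $V'$.
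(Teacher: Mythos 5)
You should first be aware that the paper contains no proof of this statement: Theorem~\ref{wp} is imported verbatim from \cite[Theorem 5.1]{ADLO12} and is used as a black box, so there is nothing internal to compare against. Judged on its own terms, your strategy --- Galerkin approximation, the energy identity obtained by testing with $\dot u$, symmetry to rewrite $\fra(t,u,\dot u)$ as $\tfrac12\tfrac{\d}{\d t}\fra(t,u,u)-\tfrac12\dot\fra(t,u,u)$, Gronwall, weak limits --- is the classical Lions route to maximal regularity for symmetric forms, and you correctly isolate the real difficulty, namely that $\dual{\A(t)u(t)}{\dot u(t)}$ is a priori meaningless for a Lions solution. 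The outline is sound, and the handling of the merely Lipschitz (rather than $C^1$) time dependence and of the measurable multiplier $B$ at the discrete level is unproblematic, since on each finite-dimensional $V_n$ the form is a Lipschitz matrix-valued function of $t$ and the Leibniz rule holds a.e.

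Two steps are genuinely incomplete. First, the initial data: to get $\norm{u_n(0)}_V\le C\norm{u^0}_V$ uniformly (which your energy estimate needs, since $E(0)$ must be controlled by $\norm{u^0}_V^2$) and $u_n(0)\to u^0$ in $V$, you cannot take $V_n$ spanned by an $H$-orthonormal basis of $V$ with $H$-orthogonal projections $P_n$; these are in general unbounded on $V$. You should instead take a $V$-orthonormal basis (which exists since $V$ is separable) and the $V$-orthogonal projections; the Galerkin mass matrix $\bigl((B(t)w_j,w_k)_H\bigr)_{j,k}$ remains invertible because $B(t)$ is coercive on $H$ and the $w_j$ are linearly independent. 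Second, uniqueness is not ``immediate from the a priori estimate'': that estimate was established only for the Galerkin approximants. To apply it to the difference $w$ of two arbitrary solutions in $H^1(0,\tau;H)\cap L^2(0,\tau;V)$ you must test the equation for $w$ with $\dot w$ and invoke the product rule for $t\mapsto\fra(t,w(t),w(t))$ --- exactly the chain-rule issue you flag as the crux for existence, now for a competitor solution that does not come from your scheme. This can be repaired (for a solution of the homogeneous problem one has $\A(\cdot)w=-B(\cdot)\dot w\in L^2(0,\tau;H)$, so the pairing $\dual{\A(t)w(t)}{\dot w(t)}$ makes sense pointwise, and a mollification-in-time argument yields the absolute continuity of $t\mapsto\fra(t,w(t),w(t))$ together with the asserted derivative), but this product-rule lemma is a substantive ingredient --- it is proved separately in \cite{ADLO12} --- and must be stated and proved, not assumed.
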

The point of this theorem is that the solution $u$ is in $H^1(0,\tau;H)\cap L^2(0,\tau;V)$;
i.e.~it has maximal regularity.
This will be important in our context in order to make sure that the  solution satisfies the desired boundary conditions.
Moreover, it is remarkable that the solution is continuous with values in $V$. In our applications this implies in particular that the knot conditions are satisfied at each moment $t\ge0$.

Now we consider Theorem~\ref{theorem:spectral_representation} in the non-autonomous case.
Since the operators $A(t)$ do not commute in general, the spectral decomposition 
(i.e.\ the orthonormal basis and the eigenvalues) will depend on $t$.
We will show that the dependence of the eigenvalues is continuous in $t$.
More precisely we assume the following.
Let $\fra\colon [0,\tau]\times V\times V \to\R$ be a Lipschitz-continuous closed form, where $V \underset c \hookrightarrow H$,
and assume that $\fra$ is symmetric.
Consider the operator $A(t)$ on $H$ associated with $\fra(t,\cdot,\cdot)$ on $H$
and denote by $\lambda_n(t)$ the n-th eigenvalue of $A(t)$ (counting multiplicity).

\begin{theorem}\label{theorem:continuity_of_eigenvalues}
	The function $\lambda_n\colon [0, \tau] \to \R$ is continuous for every $n\in\N$.
\end{theorem}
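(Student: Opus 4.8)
The plan is to fix $t_0 \in [0,\tau]$ and show that the resolvents $(\omega+\A(t))^{-1}$ converge to $(\omega+\A(t_0))^{-1}$ in operator norm on $H$ as $t \to t_0$; continuity of each eigenvalue $\lambda_n$ will then follow from the continuity of eigenvalues of self-adjoint operators under resolvent convergence (the statement proved in the Appendix). First note that by condition (d) of Definition~\ref{lip-form} every form $\fra(t,\cdot,\cdot)$ is $H$-elliptic with the \emph{same} constants $\alpha,\omega$, so each $\A(t)$, and hence each part $A(t)$ in $H$, is self-adjoint with $\omega+A(t)$ invertible. Since $V \underset c \hookrightarrow H$, the operator $(\omega+A(t))^{-1}\colon H \to V \hookrightarrow H$ is compact, so $A(t)$ has the discrete, increasing eigenvalue sequence $\lambda_n(t)$ furnished by Theorem~\ref{theorem:spectral_representation}, and (using $\Re\fra(t,u,u) \ge (\alpha c_H^{-2}-\omega)\norm u_H^2$) one has $\omega+\lambda_n(t) \ge \alpha c_H^{-2} > 0$.

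The key step is a Lipschitz estimate for $t \mapsto (\omega+\A(t))^{-1}$. For $s,t \in [0,\tau]$ the resolvent identity gives
\[
(\omega+\A(t))^{-1} - (\omega+\A(s))^{-1} = (\omega+\A(t))^{-1}\,\big(\A(s)-\A(t)\big)\,(\omega+\A(s))^{-1},
\]
which I read as a composition of maps $H \xrightarrow{(\omega+\A(s))^{-1}} V \xrightarrow{\A(s)-\A(t)} V' \xrightarrow{(\omega+\A(t))^{-1}} H$. Condition (b) of Definition~\ref{lip-form} says precisely that $\norm{\A(s)-\A(t)}_{\L(V,V')} \le \dot M\abs{t-s}$, while Proposition~\ref{proposition:resolvent_bounds}(ii) and (iii) bound the two outer factors by $c_H/\alpha$ and $c_{V'}/\alpha$. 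Multiplying the three norms yields
\[
\norm{(\omega+A(t))^{-1} - (\omega+A(s))^{-1}}_{\L(H)} \le \frac{c_H\,c_{V'}\,\dot M}{\alpha^2}\,\abs{t-s},
\]
so $t \mapsto (\omega+A(t))^{-1}$ is Lipschitz, in particular norm-continuous, as a family of compact self-adjoint positive operators on $H$.

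Finally I would pass from resolvents back to the $\lambda_n$. The eigenvalues of $(\omega+A(t))^{-1}$ are $\mu_n(t) = (\omega+\lambda_n(t))^{-1} > 0$, ordered decreasingly and counted with multiplicity consistently with the increasing ordering of the $\lambda_n(t)$. By the Courant--Fischer min-max principle the $n$-th eigenvalue of a compact self-adjoint operator is $1$-Lipschitz with respect to the operator norm, hence
\[
\abs{\mu_n(t)-\mu_n(s)} \le \norm{(\omega+A(t))^{-1} - (\omega+A(s))^{-1}}_{\L(H)},
\]
and the estimate above makes $\mu_n$ continuous. Since $\mu_n(t) > 0$ and $\mu \mapsto \mu^{-1}-\omega$ is continuous on $(0,\infty)$, the relation $\lambda_n(t) = \mu_n(t)^{-1} - \omega$ gives continuity of $\lambda_n$.

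I expect the main obstacle to be the bookkeeping in the resolvent step. The naive attempt to estimate $\A(s)-\A(t)$ in $\L(H)$ fails, since no such bound is available; one must instead treat it as a map $V \to V'$ and compensate with the smoothing of the two resolvents along the scale $V' \hookrightarrow H \hookrightarrow V$. Matching this with exactly the right norms from Proposition~\ref{proposition:resolvent_bounds}, and keeping the eigenvalue ordering and multiplicities consistent between $A(t)$ and its resolvent, are the points that require care.
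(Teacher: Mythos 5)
Your proof is correct, and its core step is exactly the paper's: the resolvent identity read as a composition $H \to V \to V' \to H$, with $\norm{\A(s)-\A(t)}_{\L(V,V')} \le \dot M\abs{t-s}$ from Definition~\ref{lip-form}\,b) and the two outer factors controlled by Proposition~\ref{proposition:resolvent_bounds}\,(ii),(iii), yielding the Lipschitz bound $c_H c_{V'}\dot M\alpha^{-2}\abs{t-s}$ for $t\mapsto(\omega+A(t))^{-1}$ in $\L(H)$. Where you genuinely diverge is the passage from norm-resolvent continuity to continuity of the $\lambda_n$. The paper delegates this to Theorem~\ref{theorem:convergence_of_eigenvalues} in the Appendix, which is proved via Riesz spectral projections, Kato's result that projections at distance less than $1$ have equal rank, and a counting lemma for eigenvalue sequences. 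You instead apply the Courant--Fischer min-max principle directly to the compact positive operators $(\omega+A(t))^{-1}$: their decreasingly ordered eigenvalues $\mu_n(t)=(\omega+\lambda_n(t))^{-1}$ are $1$-Lipschitz in the operator norm, and the uniform lower bound $\omega+\lambda_n(t)\ge\alpha c_H^{-2}>0$ (which you correctly derive from $H$-ellipticity) lets you invert back to $\lambda_n$. Your route is more elementary and self-contained -- it makes the entire Appendix unnecessary for this theorem and even gives a quantitative modulus of continuity for the $\mu_n$ -- while the paper's projection argument is stated for general self-adjoint operators with compact resolvent converging in norm-resolvent sense, without invoking a variational characterization. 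Both are sound; the one point your version must (and does) handle with care is that the decreasing ordering of the $\mu_n(t)$, with multiplicities, corresponds exactly to the increasing ordering of the $\lambda_n(t)$, which holds because $x\mapsto(\omega+x)^{-1}$ is strictly decreasing on the relevant range.
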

\begin{proof}
	Let $t\in [0, \tau]$ and consider the constant $\omega$ of Definition~\ref{lip-form} d).
	Then $\omega+A(t) \colon D(A(t)) \to H$ is bijective and $(\omega + A(t))^{-1} \in\L(H)$ with
	$(\omega + A(t))^{-1}H = D(A(t)) \subset V$.
	The mapping $R\colon H \to V$ given by $Ru=(\omega + A(t))^{-1}u$ is bounded by Proposition~\ref{proposition:resolvent_bounds}.
	Since the embedding $\iota\colon V\to H$ is compact by hypothesis, $(\omega + A(t))^{-1}= \iota \circ R(t) \in \L(H)$ is compact.

	 Let $s,t \in [0,\tau]$. Then
	\begin{align*}
		&\norm{(\omega+A(t))^{-1}-(\omega+A(s))^{-1}}_{\L(H)}\\ 
			&\quad= \norm{(\omega+\A(t))^{-1}(\omega+\A(s) - (\omega+\A(t)) )(\omega+\A(s))^{-1}}_{\L(H)}\\
			&\quad= \norm{(\omega+\A(t))^{-1}(\A(s) -\A(t) )(\omega+\A(s))^{-1}}_{\L(H)}\\
			&\quad\le \norm{(\omega+\A(t))^{-1}}_{\L(V',H)} \norm{\A(s) - \A(t) }_{\L(V,V')} \norm{(\omega+\A(s))^{-1}}_{\L(H,V)}\\
			&\quad\le \frac{c_{V'}} \alpha \dot M \abs{t-s} \frac {c_H}\alpha,
	\end{align*}
	by Proposition~\ref{proposition:resolvent_bounds} (ii), (iii) and property b) of the form $\fra$ in Definition~\ref{lip-form}.
	Thus the mapping $(\omega+A(\cdot))^{-1}\colon [0,\tau] \to \L(H)$ is continuous.
	Now the theorem follows by the result in the Appendix (Theorem~\ref{theorem:convergence_of_eigenvalues}).
\end{proof}

\begin{remark}\label{remark:no_Lip}
Note that  we could prove the continuity of the mapping $(\omega+A(\cdot))^{-1}$ above without using property b) of the form $\fra$ in Definition~\ref{lip-form}. We merely need that $\A(\cdot)\colon [0,\infty) \to \L(V,V')$ is continuous instead of Lipschitz continuous.
\end{remark}
\section{Diffusion in networks}\label{section:diffusion_on_network}

We apply the theory from the previous section to treat non-autonomous diffusion processes in a finite network. The network is represented by a finite, simple, connected graph  $G$ with $m$ edges $\me _1,\dots,\me _m$ and $n$ vertices $\mv_1,\dots,\mv_n$. We identify all edges with the interval $[0,1]$. We further assume that all the vertices have degree at least 2, i.e., that each vertex is
incident to at least 2 edges. 

The structure of the network  can be described by various graph matrices. 
Here we use the $n\times m$ {\em incidence matrices} 
$\Phi^+:=(\phi^+_{ij})$,  $\Phi^-:=(\phi^-_{ij})$, and    $\Phi:=(\phi_{ij})$,  defined by
\begin{equation*}
\phi^+_{ij}:=\left\{
\begin{array}{rl}
1, & \hbox{if }\mv _i= \me _j(1),\\
0, & \hbox{otherwise},
\end{array}
\right.
\qquad
\phi^-_{ij}:=\left\{
\begin{array}{rl}
1, & \hbox{if } \mv _i=\me _j(0),\\
0, & \hbox{otherwise,}
\end{array}
\right.
\end{equation*}
and
 $$\Phi:=\Phi^+-\Phi^-.$$  Further,
let $\Gamma(\mv_i)$ be the set of all the indices of the edges
having an initial or  endpoint at $\mv _i$, i.e.,
\[
	\Gamma(\mv _i):=\left\{j\in \{1,\ldots,m\}: \phi_{ij}\ne 0 \right\}, \quad i=1,\dots,n.
\]
We will consider functions which are defined on the edges of the graph.
These functions will be assumed to be continuous on the graph.
By this we mean the following.
Let $f \in C[0,1]^m$, $f=(f_1,\dots f_m)$.
For $j \in \{ 1,\dots, m \}$ we set
$f_j(\mv_i) := f_j(0)$ if $\me_j(0) = \mv_i$ and $f_j(\mv_i) := f_j(1)$ if $\me_j(1) = \mv_i$.
Then we say that $f$ is \emph{continuous on the graph}, if
\begin{equation}\label{eq:continuous_on_graph} 
	f_j(\mv_i) = f_l(\mv_i) \quad (j,l \in \Gamma(\mv_i))
\end{equation}
for all $i=1, \dots, n$.

\begin{lemma}\label{lemma:continuous_on_graph}
A function $f \in C[0,1]^m$ is continuous on the graph if and only if there exists a vector $d\in {\C}^n$ such that
  $(\Phi^-)^\top d=f(0)$  and $(\Phi^+)^\top d=f(1)$.
\end{lemma}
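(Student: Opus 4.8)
The plan is to translate the matrix identities $(\Phi^-)^\top d = f(0)$ and $(\Phi^+)^\top d = f(1)$ into pointwise statements at the vertices and then match them against the defining condition \eqref{eq:continuous_on_graph}. The essential structural fact I would record first is that, since $G$ is simple, each edge $\me_j$ has a unique initial vertex and a unique terminal vertex; equivalently, each column of $\Phi^-$ and each column of $\Phi^+$ contains exactly one entry equal to $1$, all others being $0$. Writing out the $j$-th component of the products then gives
\[
	\big((\Phi^-)^\top d\big)_j = \sum_{i=1}^n \phi^-_{ij} d_i = d_{i} \quad \text{where } \mv_i = \me_j(0),
\]
and likewise $\big((\Phi^+)^\top d\big)_j = d_{i'}$ where $\mv_{i'} = \me_j(1)$. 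Using the convention $f_j(\mv_i) = f_j(0)$ or $f_j(1)$ from the definition, the two matrix equations are therefore jointly equivalent to the single family of scalar conditions $d_i = f_j(\mv_i)$ for every vertex $\mv_i$ and every incident edge $j \in \Gamma(\mv_i)$.

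For the forward implication I would assume $f$ is continuous on the graph. Then, for each fixed $i$, the numbers $f_j(\mv_i)$ with $j \in \Gamma(\mv_i)$ all coincide by \eqref{eq:continuous_on_graph}; since every vertex has degree at least $2$, the set $\Gamma(\mv_i)$ is nonempty, so I may define $d_i$ to be this common value. By the computation above, this choice of $d \in \C^n$ satisfies both $(\Phi^-)^\top d = f(0)$ and $(\Phi^+)^\top d = f(1)$.

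For the converse I would assume such a $d$ exists. The reformulation then yields $f_j(\mv_i) = d_i$ for every $i$ and every $j \in \Gamma(\mv_i)$; in particular $f_j(\mv_i) = d_i = f_l(\mv_i)$ for all $j, l \in \Gamma(\mv_i)$, which is exactly \eqref{eq:continuous_on_graph}. Hence $f$ is continuous on the graph.

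There is no genuine analytic difficulty here: the argument is purely combinatorial bookkeeping. The one step I would treat as the crux is the componentwise evaluation of $(\Phi^\pm)^\top d$, where one must use that each column of the incidence matrices has a single nonzero entry so that the sum collapses to one term. Once this is in place, the vector $d$ is literally a consistent assignment of a value to each vertex, and continuity on the graph is precisely the assertion that such an assignment exists.
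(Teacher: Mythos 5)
Your proof is correct and follows essentially the same route as the paper: define $d_i$ as the common vertex value for the forward direction, and read off \eqref{eq:continuous_on_graph} from the componentwise evaluation of $(\Phi^\pm)^\top d$ for the converse. Your version merely makes explicit the observation that each column of $\Phi^\pm$ has a single nonzero entry, which the paper uses implicitly.
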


\begin{proof}
Let  $f \in C[0,1]^m$ be a continuous function on the graph. For a vertex $\mv_i$ denote by $d_i$ the ``value of $f$ at $\mv_i$'', i.e.~$d_i:= f_j(\mv_i)$ for any $j\in\Gamma(\mv_i)$. Observe that for $d:=(d_i)_{i=1,\dots, n}$ we obtain $(\Phi^-)^\top d=f(0)$  and $(\Phi^+)^\top d=f(1)$.

Conversely, if   $(\Phi^-)^\top d=f(0)$  and $(\Phi^+)^\top d=f(1)$ for some $d\in {\C}^n$, 
then for every $j\in \Gamma(\mv_i)$  we have either $d_i=\phi_{ij}^- d_i = f_j(0)$ or $d_i=\phi_{ij}^+ d_i = f_j(1)$, so \eqref{eq:continuous_on_graph} holds. 
\end{proof}

We consider the following diffusion process in the network. At first we consider a finite time interval $[0,\tau]$ where $\tau>0$.
\begin{itemize}
\item On every edge $\me_j$, $j=1,\dots,m$,  evolution is governed by the heat equation:
\begin{equation}\label{diff_edge}
\dot{u}_j(t,x)= c_j(t) \cdot u_j''(t,x) \quad (t\in [0,\tau], \; x\in(0,1)),
\end{equation}
where $c_1(t),\ldots,c_m(t)$ are time-dependent positive diffusion coefficients.
Here the dot refers to the derivative with respect to the time, 
and the prime refers to the derivative with respect to the space variable.
\item We impose that for each $t>0$ the function $u(t,\cdot)=(u_1(t,\cdot),\dots,u_m(t,\cdot))$ is continuous on the graph; 
i.e., $\eqref{eq:continuous_on_graph}$ is satisfied.
\item Boundary conditions at the vertices $\mv_i$, $i=1,\ldots,n$, are Kirchhoff-type conditions of the form:
\begin{equation}\label{bound}
\sum_{j=1}^m\phi_{ij}\mu_j(t) u'_j(t,\mv_i)= 0 \quad (t\in [0,\tau]),\end{equation}
where  $\mu_1(t),\ldots,\mu_m(t)$ are given time-dependent positive conductivity factors.
Here we denote by $u_j'(t,\mv_i)$ the left derivative of the function $u_j(t,\cdot)$ at $0$ if $e_j(0)=\mv_i$ 
and the right derivative at $1$ if $e_j(1)=\mv_i$.
\item The initial conditions on the edges $\me_j$, $j=1,\dots,m$, are
\begin{equation}\label{ic}
u_j(0,x)={u}^0_{j}(x) \quad (x\in (0,1)). 
\end{equation}
\end{itemize}

Before identifying the problem with an abstract Cauchy problem  we write the continuity and boundary conditions in a more compact form. To this end we  introduce \emph{weighted incidence matrices}
 $\Phi^+_w(t):=\left(\omega^+_{ij}(t)\right)$ and $\Phi^-_w(t):= \left(\omega^-_{ij}(t)\right)$ with entries
\begin{equation*}
\omega^+_{ij}(t):=\left\{
\begin{array}{ll}
\mu_j(t) , & \hbox{if }\mv _i= \me _j(1),\\
0, & \hbox{otherwise},
\end{array}
\right. \hbox{ and}\quad \omega^-_{ij}(t) :=\left\{
\begin{array}{ll}
\mu_j(t), & \hbox{if } \mv _i=\me _j(0),\\
0, & \hbox{otherwise}.
\end{array}
\right.
\end{equation*}
With these notations,  the Kirchhoff law $\eqref{bound}$ can be written in matrix form as
\begin{equation}
\label{kl}
\Phi_w^-(t)\cdot u'(t,0)=\Phi_w^+(t)\cdot u'(t,1) \quad (t\in [0,\tau]).
\end{equation}

In order to use the previously introduced form methods we now first define the appropriate Hilbert spaces
\[
	H := L^2(0,1)^m\text{ and } V:=\left\{f\in H^1(0,1)^m : f \text{ is continuous on the graph} \right\}
\]
equipped with the inner products
\begin{align}
  (f,g)_H &= \sum_{j=1}^m \int_0^1f_j(x) g_j(x) \ \d x \quad (f,g\in H) \label{inn_H}\\
(f,g)_V&= \sum_{j=1}^m \int_0^1\left(f_j(x) g_j(x) +f'_j(x) g'_j(x)\right) \; \d x \quad (f,g\in V).\label{inn_V}
\end{align}
Recall that $H^1(0,1) \hookrightarrow C[0,1]$ and so $V$ is a closed subspace of $H^1(0,1)^m$.
Note that $V$ is a dense subspace of $H$.
For $t\in [0,\tau]$ we define the function $\fra(t,\cdot,\cdot) \colon V\times V \to \R$ by
 \begin{equation}\label{form}
 \fra(t,f,g)=\sum_{j=1}^m \mu_j(t)\int_0^1f'_j(x) g'_j(x) \ \d x.
 \end{equation}
Since $\mu_j(t) > 0$ for all $j=1,\dots,m$ one easily gets that $\fra(t,\cdot,\cdot)$ is a symmetric, continuous, $H$-elliptic bilinear form.

\begin{proposition}\label{asoc-op}
Let $t >0$. The operator associated with the form $\fra(t,\cdot,\cdot)$ on $L^2(0,1)^m$ is given by
\begin{align*}
A(t)&:= \diag\left(- \mu_j(t) \frac{\d^2}{\d x^2}\right)_{j=1,\dots,m},\\
D(A(t))&:=\left\{u\in V\cap \left(H^2(0,1)\right)^m :  \Phi_{w}^-(t)\cdot u'(0)=\Phi_{w}^+(t)\cdot u'(1)\right\}.
\end{align*}
 \end{proposition}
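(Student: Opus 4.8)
The plan is to identify the operator associated with the form $\fra(t,\cdot,\cdot)$ by unwinding the abstract definition of the associated operator on $H$ given in Section~\ref{section:forms}. Recall that $u \in D(A(t))$ precisely when $u \in V$ and there exists $h \in H = L^2(0,1)^m$ such that $\fra(t,u,v) = (h,v)_H$ for all $v \in V$, and then $A(t)u = h$. So the whole proof amounts to computing, via integration by parts, what condition on $u$ makes the map $v \mapsto \fra(t,u,v)$ extend to an $L^2$-inner product, and then reading off $h$.

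First I would fix $t$ and take $u \in V \cap (H^2(0,1))^m$ so that the second derivatives make classical sense; integration by parts on each edge gives, for every $v \in V$,
\begin{equation*}
\fra(t,u,v) = \sum_{j=1}^m \mu_j(t)\int_0^1 u_j' v_j' \,\d x
= \sum_{j=1}^m \mu_j(t)\Big[u_j' v_j\Big]_0^1 - \sum_{j=1}^m \mu_j(t)\int_0^1 u_j'' v_j \,\d x.
\end{equation*}
The bulk term already matches $(h,v)_H$ with $h_j = -\mu_j(t) u_j''$, i.e.\ $h = A(t)u$ with $A(t)$ the claimed diagonal operator. Thus $u$ lies in $D(A(t))$ if and only if the boundary term $\sum_{j=1}^m \mu_j(t)\big[u_j'(1)v_j(1) - u_j'(0)v_j(0)\big]$ vanishes for every $v \in V$. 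The remaining work is to show that this vanishing is equivalent to the Kirchhoff condition $\Phi_w^-(t)u'(0) = \Phi_w^+(t)u'(1)$.

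For that equivalence I would regroup the boundary sum by vertices rather than edges. Since every $v \in V$ is continuous on the graph, its boundary values at $\mv_i$ are a single number $d_i$ (Lemma~\ref{lemma:continuous_on_graph}), and the boundary term collapses to $\sum_{i=1}^n d_i \big(\sum_{j=1}^m \phi_{ij}\mu_j(t) u_j'(\mv_i)\big)$, using $\phi_{ij} = \phi^+_{ij}-\phi^-_{ij}$ to record the sign of the endpoint. Because the vertex values $d_i$ of functions in $V$ are completely free (one can realize any prescribed vector $d \in \R^n$ by an $H^1$ function continuous on the graph), the boundary term vanishes for all $v \in V$ exactly when each inner sum vanishes, which is \eqref{bound}, equivalently the matrix form \eqref{kl} with the weighted incidence matrices. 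This establishes $D(A(t))$ as stated.

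The one genuinely non-routine point, and the main obstacle, is the \emph{regularity} direction: one must check that $D(A(t))$ contains no elements outside $V \cap (H^2(0,1))^m$, i.e.\ that $u \in V$ with $v \mapsto \fra(t,u,v)$ bounded in the $H$-norm forces $u_j \in H^2(0,1)$. I would argue this by first testing against $v$ supported in the interior of a single edge (so the boundary term drops out): this shows $\mu_j(t)u_j'' = -h_j$ in the distributional sense with $h_j \in L^2$, hence $u_j' \in H^1$ and $u_j \in H^2(0,1)$ since $\mu_j(t)>0$. Only after this interior regularity is secured does the integration by parts above become legitimate and the boundary computation meaningful; with that in hand the freedom of the vertex values $d_i$ closes the argument.
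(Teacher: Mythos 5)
Your proposal is correct and follows essentially the same route as the paper's own proof: unwind the abstract definition of the associated operator, integrate by parts, use Lemma~\ref{lemma:continuous_on_graph} to collapse the boundary sum to $d_v^\top\bigl(\Phi_w^+(t)u'(1)-\Phi_w^-(t)u'(0)\bigr)$, invoke the arbitrariness of the vertex values $d_v$, and obtain the $H^2$-regularity by first testing against functions supported in the interior of a single edge. Your explicit remark that interior regularity must be secured \emph{before} the integration by parts becomes legitimate is a point the paper treats more tersely, but the substance is identical.
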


\begin{proof}
Remember that the  operator $\widetilde{A}(t)$ associated with the form $\fra(t,\cdot,\cdot)$ on $H$ is defined by
\begin{align*}
D(\widetilde A(t)) &= \{u\in V : \exists h\in H \text{ s.t. } \fra(t,u,v) = (h,v)_H \text{ for all }v\in V\},\\
\widetilde A(t) u&=  h.
\end{align*}
We show that $A(t)=\widetilde{A}(t)$.

First we prove $A(t)\subset \widetilde{A}(t)$.
Let $u \in D(A(t))$ and $h:= A(t) u$. 
Since $u \in V\cap \left(H^2(0,1)\right)^m$, integration by parts yields
\begin{align*}
\fra(t,u,v)&=\sum_{j=1}^m \mu_j(t)\int_0^1u'_j(x) v'_j(x) \; \d x\\
&=\sum_{j=1}^m \mu_j(t)\left[u'_j(1)v_j(1)-u'_j(0)v_j(0) \right] -\sum_{j=1}^m \mu_j(t) \int_0^1u''_j(x) v_j(x) \; \d x
\end{align*}
for all $v\in V$. By Lemma~\ref{lemma:continuous_on_graph}, for every $v\in V$ there exists $d_v\in\C^n$ such that $v(0)=(\Phi^-)^\top d_v$ and $v(1)=(\Phi^+)^\top d_v $. 
Note also  that the incidence matrices $\Phi^-$ and $\Phi^+$ have exactly one nonzero entry in every column (corresponding to the two ends of an edge) and have the same zero-pattern as the  weighted incidence matrices $\Phi_{w}^-(t)$ and $\Phi_{w}^+(t)$, respectively. Thus
\begin{equation}\label{eq:boundary_cond}
\sum_{j=1}^m \mu_j(t)\left[u'_j(1)v_j(1)-u'_j(0)v_j(0) \right]  = d_v^\top\cdot \left( \Phi_{w}^+(t) \cdot u'(1)-\Phi_{w}^-(t)\cdot u'(0)  \right) = 0
\end{equation}
for all $v\in V$. 
Hence $\fra(t, u, v) = (h,v)_H$ for all $v \in V$.

To show that $\widetilde{A}(t)\subset A(t)$, let $u \in D(\widetilde{A}(t))$ and set $h:= \widetilde{A}(t)u$.
Since $\fra(t,u,v) =(h,v)_H$ for all $v \in V$ and $\D(0,1)^m \subset V$ 
we obtain by  integration by parts as above that $u \in H^2(0,1)^m$ and $h_j = - \mu_j(t) u''_j$.
Thus 
\begin{equation*}
	\sum_{j=1}^m \mu_j(t)\left[u'_j(1)v_j(1)-u'_j(0)v_j(0) \right] = 0
\end{equation*}
for all $v \in V$.
Using Lemma~\ref{lemma:continuous_on_graph} as before we obtain \eqref{eq:boundary_cond} for every $v\in V$. The entries of $(d_v)_i$ are the joint values of $v_j$ attained at vertex $\mv_i$ for $j\in\Gamma(\mv_i)$. Since this holds for arbitrary $v\in V$, it follows
\[\Phi_\omega^-\cdot u'(0) = \Phi_\omega^+\cdot u'(1).\]
Hence  $u \in D(A(t))$ and $A(t)u = h$.
\end{proof}

Note that the domain $D(A(t))$ consists of continuous  functions on the graph that satisfy the non-autonomous Kirchhoff-knot conditions $\eqref{bound}$. Thus in view of Proposition~\ref{asoc-op} we may rewrite $\eqref{diff_edge}$-$\eqref{ic}$ in the form
\begin{align*}
        \dot u(t) + A(t)u(t)  &= 0 \quad \text{a.e.}\\
                        u(0)    &={u}^0, 
\end{align*}
if $\mu_j(t)=c_j(t)$ for all $j=1,\dots,m$.
In order to treat the case $\mu_j \neq c_j$ we introduce the operator 
\begin{equation}\label{eq:op-B}
B(t):= \diag(b_j(t)) \in \L(H)\text{ with }b_j(t):=\frac{\mu_j(t)}{c_j(t)}.
\end{equation}
Then $\eqref{diff_edge}$-$\eqref{ic}$ can be written as
\begin{equation}\begin{aligned}\label{eq:cp}
        B(t)\dot u(t) + A(t)u(t)  &= 0 \quad (\text{a.e. }t\in[0,\tau]  )\\
                        u(0)    &={u}^0.
\end{aligned}\end{equation}
Assuming some regularity properties on the time-depending coefficients $c_j(t)$ and $\mu_j(t)$ appearing in the equations \eqref{diff_edge} and the boundary conditions \eqref{bound} we finally obtain the following well-posedness result of problem $\eqref{diff_edge}$-$\eqref{ic}$.

\begin{theorem}\label{network-wp}
Let the functions $\mu_j\colon [0,\tau] \to\R^+$ be Lipschitz continuous and the functions $c_j\colon [0,\tau] \to\R^+$ be measurable.
Suppose there exists an $\varepsilon\in (0,1)$ such that 
\[
	\varepsilon \le\mu_j(t), c_j(t)\le\varepsilon^{-1} \quad (j=1,\dots m, \, t\in[0,\tau]).
\]
Let  $u^0\in V$ and $F\in L^2(0,\tau;H)$ be given. Then there exists a unique function
\[
	u \in H^1(0,\tau;H) \cap C([0,\tau];V)
\]
such that for all  $j=1,\dots, m$ the following holds

\begin{align*}
       & \dot{u}_j(t,x)=c_j u_j''(t,x)+F_j(t,x)  &(t\in[0,\tau],\, x\in (0,1)),\\[2mm]
     &  \sum_{k=1}^m\phi_{ik}\mu_k(t) u'_k(t,\mv_i)=0  &(t\in[0,\tau], \, i=1,\dots, n),\\[2mm]
       &   u_j(0,x)    =u^0_j(x).&
\end{align*}

\end{theorem}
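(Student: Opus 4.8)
The plan is to recognize \eqref{diff_edge}--\eqref{ic} as a concrete instance of the abstract non-autonomous problem solved in Theorem~\ref{wp}, using the form $\fra$ from \eqref{form}, the associated operators $A(t)$ identified in Proposition~\ref{asoc-op}, and the multiplication operator $B(t)$ from \eqref{eq:op-B}. The first task is to absorb the inhomogeneity $F$ correctly: multiplying the $j$-th heat equation $\dot u_j = c_j u_j'' + F_j$ by $b_j(t)=\mu_j(t)/c_j(t)$ gives $b_j(t)\dot u_j - \mu_j(t)u_j'' = b_j(t)F_j$, which by Proposition~\ref{asoc-op} is exactly the $j$-th component of $B(t)\dot u(t) + A(t)u(t) = B(t)F(t)$. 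Thus I would set $f:=B(\cdot)F$ and solve the abstract problem $B(t)\dot u(t)+A(t)u(t)=f(t)$, $u(0)=u^0$.

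Before invoking Theorem~\ref{wp} I would check its hypotheses, beginning with the four conditions of Definition~\ref{lip-form} for $\fra$. Bilinearity and symmetry are immediate from \eqref{form}. Boundedness (property c) holds with $M=\varepsilon^{-1}$ since $\mu_j(t)\le\varepsilon^{-1}$. Lipschitz continuity in $t$ (property b) follows from $\fra(t,u,v)-\fra(s,u,v)=\sum_{j=1}^m(\mu_j(t)-\mu_j(s))\int_0^1 u_j'v_j'\,\d x$ together with the Lipschitz continuity of each $\mu_j$, producing a constant $\dot M$ assembled from their Lipschitz constants. Finally $H$-ellipticity (property d) holds with $\omega=\alpha=\varepsilon$, because by $\mu_j(t)\ge\varepsilon$ and \eqref{inn_V} one has $\fra(t,u,u)+\varepsilon\norm{u}_H^2\ge\varepsilon\sum_{j=1}^m\norm{u_j'}_{L^2}^2+\varepsilon\norm{u}_H^2=\varepsilon\norm{u}_V^2$.

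Next I would verify the requirements on $B$ and the source. Since each $\mu_j$ is continuous and each $c_j$ measurable, $b_j=\mu_j/c_j$ is measurable, so $B(\cdot)=\diag(b_j(\cdot))$ is strongly measurable. From $\varepsilon\le\mu_j,c_j\le\varepsilon^{-1}$ one gets $\varepsilon^2\le b_j(t)\le\varepsilon^{-2}$, hence $\scp{B(t)u}{u}_H=\sum_{j=1}^m b_j(t)\norm{u_j}_{L^2}^2$ satisfies the two-sided estimate of Theorem~\ref{wp} with $\beta=\varepsilon^2\in(0,1)$. Moreover $f=B(\cdot)F\in L^2(0,\tau;H)$ because $\norm{B(t)}_{\L(H)}\le\varepsilon^{-2}$ uniformly. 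Theorem~\ref{wp} then provides a unique $u\in H^1(0,\tau;H)\cap L^2(0,\tau;V)$ with $u\in C([0,\tau];V)$ solving $B(t)\dot u+A(t)u=f$ a.e.\ and $u(0)=u^0$; since $C([0,\tau];V)\subset L^2(0,\tau;V)$ on the finite interval, the stated regularity class $H^1(0,\tau;H)\cap C([0,\tau];V)$ is exactly recovered.

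It remains to translate the abstract solution back into the network problem. As the abstract equation holds a.e., $A(t)u(t)=f(t)-B(t)\dot u(t)\in H$ for a.e.\ $t$, i.e.\ $u(t)\in D(A(t))$; by Proposition~\ref{asoc-op} this forces $u(t)\in V\cap H^2(0,1)^m$ with $(A(t)u(t))_j=-\mu_j(t)u_j''(t)$ and the Kirchhoff conditions \eqref{bound} satisfied for a.e.\ $t$, while $u\in C([0,\tau];V)$ guarantees continuity on the graph for \emph{every} $t$. Dividing the $j$-th component of the abstract identity by $b_j(t)$ recovers $\dot u_j=c_j u_j''+F_j$ a.e., and uniqueness is inherited directly from Theorem~\ref{wp}. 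I do not expect a genuine obstacle here, since the heavy analysis is already packaged into Theorem~\ref{wp} and Proposition~\ref{asoc-op}; the only points requiring care are the correct placement of $B(t)$ on the right-hand side (so that $f=B(\cdot)F$, not $F$) and the observation that the abstract a.e.\ identity is what makes $u(t)\in D(A(t))$, thereby ensuring the non-autonomous knot conditions genuinely hold.
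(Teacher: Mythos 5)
Your proposal is correct and follows essentially the same route as the paper's proof: verify the hypotheses of Theorem~\ref{wp} for the form \eqref{form} and the multiplication operator $B(t)$, absorb the inhomogeneity by setting $f=B(\cdot)F$ (the paper's $\widetilde F$), apply Theorem~\ref{wp}, and divide by $b_j(t)$ to recover the network system. You supply somewhat more detail than the paper on the back-translation step (identifying $u(t)\in D(A(t))$ a.e.\ via Proposition~\ref{asoc-op}), but the argument is the same.
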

\begin{proof}
We check the conditions of Theorem~\ref{wp}.
First we have to show that the function $\fra \colon [0,\tau]\times V\times V \to \R$ defined by $\eqref{form}$ is a Lipschitz continuous closed form.
This is true for $M := \max_{t\in[0,\tau],j=1,\dots,m} \mu_j(t)$, $\dot M := \max_{j=1,\dots,m} L_j$ where $L_j$ are the Lipschitz constants of the fuctions $\mu_j$
and $\omega,\alpha := \varepsilon$.

The function $B\colon [0,\tau]\to \L(H)$ is measurable since the functions $b_j(\cdot):=\frac{\mu_j(\cdot)}{c_j(\cdot)}$ are measurable.
Moreover note that  $\varepsilon^2 \le b_j(t) \le \varepsilon^{-2}$ for all $t\in[0,\tau]$, thus we obtain that
\begin{equation}\label{eq:estimate-B}
	\varepsilon^2 \norm{g}_H^2  \le \Re (B(t)g , g)_H \le\varepsilon^{-2}\norm{g}_H^2 
\end{equation}
for all $g\in H$ and $t\in[0,\tau]$. Now given $F\in L^2(0,\tau; H)$ we define $\widetilde{F}(t):= B(t) F(t).$ Then by  Theorem~\ref{wp} we find a unique solution $u$ of
\[B(t)\dot{u}(t) + A(t)u(t)=\widetilde{F}(t)\quad (\text{a.e. }  t \in [0,\tau]),\quad u(0)=u^0.
\]
Dividing by $b_j(t)$ we see that $u$ is a solution of the system as formulated in the Theorem. 
\end{proof}

Thus we have well-posedness for every choice of the diffusion coefficients $c_j(t)$ and the conductivity factors $\mu_j(t)$ (up to slight regularity assumptions).

Our aim is to examine stability. Therefore we  extend Theorem~\ref{network-wp} to the half line $[0,\infty)$.
This is possible since $\tau > 0$ was arbitrary. 
We formulate the result using an abstract (but equivalent) notation, which will be more convenient in Section~\ref{section:stability}.

\begin{corollary}\label{corollary:network_wp}
	Let the functions $\mu_j\colon [0,\infty) \to\R^+$ be locally Lipschitz continuous and the functions $c_j\colon [0,\infty) \to\R^+$ be measurable.
	Suppose there exists an $\varepsilon\in (0,1)$ such that 
	\[
		\varepsilon \le\mu_j(t), c_j(t)\le\varepsilon^{-1} \quad (j=1,\dots m, \, t\in[0,\infty)).
	\]
	Then for every $u^0\in V$ and $F \in L^2_{loc}([0,\infty);H)$ there exists a unique solution 
        	$u \in H^1_{loc}([0,\infty);H)\cap L^2_{loc}([0,\infty);V)\cap C([0,\infty),V)$
	of the non-autonomous Cauchy problem
\begin{equation*}
		\begin{aligned}
			B(t)\dot u(t) + A(t)u(t) &= F(t) \quad (\text{a.e. } t \in [0,\infty) )\\
				u(0) &= u^0;
		\end{aligned}
\end{equation*}
i.e.\ $u\vert_{[0,\tau]} \in L^2(0,\tau;V) \cap H^1(0,\tau;H)\cap C([0,\tau],V)$ is the unique solution of
\[
	B(t)\dot u(t) + A(t)u(t) = F(t) \quad (\text{a.e. } t \in [0,\tau]),\quad u(0)=u^0
\]
for all $\tau > 0$.
\end{corollary}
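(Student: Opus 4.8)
The plan is to obtain the global solution by solving the problem on each finite interval $[0,\tau]$ and gluing the pieces together; the entire analytic content is already contained in Theorem~\ref{wp}, so the remaining work is purely one of consistency. First I would fix $\tau>0$ and check that the hypotheses of Theorem~\ref{wp} hold on $[0,\tau]$. The form $\fra$ from \eqref{form}, restricted to $[0,\tau]\times V\times V$, is a symmetric Lipschitz continuous closed form in the sense of Definition~\ref{lip-form}: local Lipschitz continuity of the $\mu_j$ yields a finite Lipschitz constant on the compact interval $[0,\tau]$, and the uniform two-sided bound $\varepsilon\le\mu_j(t)\le\varepsilon^{-1}$ supplies the continuity and $H$-ellipticity constants exactly as in the proof of Theorem~\ref{network-wp}. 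The operator $B$ from \eqref{eq:op-B} is strongly measurable and satisfies \eqref{eq:estimate-B} with $\beta=\varepsilon^2$, and $F|_{[0,\tau]}\in L^2(0,\tau;H)$. Hence Theorem~\ref{wp} furnishes a unique
\[
u_\tau\in H^1(0,\tau;H)\cap L^2(0,\tau;V)\cap C([0,\tau];V)
\]
solving $B(t)\dot u_\tau(t)+A(t)u_\tau(t)=F(t)$ a.e.\ on $[0,\tau]$ with $u_\tau(0)=u^0$.

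The crucial step is consistency of this family. For $0<\sigma\le\tau$ I would observe that the restriction $u_\tau|_{[0,\sigma]}$ again lies in $H^1(0,\sigma;H)\cap L^2(0,\sigma;V)\cap C([0,\sigma];V)$, satisfies the Cauchy problem a.e.\ on $[0,\sigma]$ (the equation holds a.e.\ on the larger interval, hence a.e.\ on $[0,\sigma]$), and has the correct initial value $u^0$. The uniqueness assertion of Theorem~\ref{wp} applied on $[0,\sigma]$ then forces $u_\tau|_{[0,\sigma]}=u_\sigma$. Consequently the map $u\colon[0,\infty)\to V$, $u(t):=u_\tau(t)$ for any $\tau\ge t$, is well defined. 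By construction $u|_{[0,\tau]}=u_\tau$ for every $\tau$, so $u$ has the claimed local regularity $H^1_{loc}([0,\infty);H)\cap L^2_{loc}([0,\infty);V)\cap C([0,\infty);V)$, satisfies $B(t)\dot u(t)+A(t)u(t)=F(t)$ for a.e.\ $t\in[0,\infty)$, and $u(0)=u^0$.

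For uniqueness, if $v$ is any function with the stated local regularity solving the global problem, then for each $\tau>0$ the restriction $v|_{[0,\tau]}$ solves the problem on $[0,\tau]$ and thus equals $u_\tau=u|_{[0,\tau]}$ by Theorem~\ref{wp}; since $\tau$ is arbitrary, $v=u$. I expect the only point requiring care --- and it is a mild one --- to be the bookkeeping that restriction to a subinterval preserves the solution property, so that the uniqueness statements on nested intervals are compatible; this is immediate from the pointwise (``a.e.'') nature of the equation and the fact that restriction respects each of the spaces $H^1$, $L^2(\cdot;V)$ and $C(\cdot;V)$. Note that no global a priori estimate is obtained this way, since the constant $C$ in Theorem~\ref{wp} depends on $\tau$ and may grow as $\tau\to\infty$; however only local regularity is claimed in the corollary, so this presents no obstacle.
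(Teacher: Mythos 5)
Your argument is correct and is exactly the route the paper intends: the paper dispenses with a proof by remarking that ``this is possible since $\tau>0$ was arbitrary,'' and your write-up supplies the standard details --- verification of the hypotheses of Theorem~\ref{wp} on each compact interval, consistency of the finite-horizon solutions via uniqueness on nested intervals, gluing, and uniqueness of the global solution by restriction. Your closing observation that no $\tau$-uniform a priori bound is obtained (and none is claimed) is also accurate.
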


\section{Positivity}\label{section:positivity}

In this section we show that the solution in Theorem~\ref{network-wp} is positive
whenever the initial value and the inhomogeneity $F$ are positive.
We start by a result which is of independent interest.

Let $(\Omega,\Sigma,\mu)$ be a measure space.
Then $f \in L^2(\Omega)$ may be decomposed in its positive and negative parts
$f=f^+ - f^-$ where
$f^+(x)=f(x)$ if $f(x) > 0$ and $f^+(x)=0$ otherwise, while $f^-=(-f)^+$.

\begin{proposition}\label{prop:positivity}
Let $u\in H^1(0,\tau;L^2(\Omega))$. Define $u^+\colon [0,\tau]\to L^2(\Omega)$ by
\[
u^+(t):=u(t)^+ \quad (t\in[0,\tau]).
\]
Then $u^+ \in H^1(0,\tau;L^2(\Omega))$ and
\begin{equation}\label{eq:u+'}
(u^+)\dot{}(t)=\dot u(t) \cdot \1_{\{u(t)>0\}} \quad (\text{a.e. }t\in[0,\tau]).
\end{equation}
Here $\1_{\{u(t)>0\}}$ is the characteristic function of the set   $\{x \in \Omega : u(t)(x) > 0\}$.
\end{proposition}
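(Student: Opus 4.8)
The plan is to reduce the claim to the classical one-dimensional chain rule for positive parts of scalar Sobolev functions and then to reassemble by a Fubini-type argument. Recall that $L^2(0,\tau;L^2(\Omega)) \cong L^2((0,\tau)\times\Omega)$, so both $u$ and its weak derivative $\dot u$ may be regarded as (classes of) measurable functions $u(t,x)$, $\dot u(t,x)$ on $(0,\tau)\times\Omega$. The scalar fact I want to invoke is standard: if $w\in H^1(0,\tau)$ then $w^+\in H^1(0,\tau)$ and $(w^+)' = w'\,\1_{\{w>0\}}$ almost everywhere (the Stampacchia truncation lemma; in one variable $w$ is even absolutely continuous). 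Applied to $w$ and to $-w$ this also yields $w' = 0$ a.e.\ on $\{w=0\}$, so that replacing $\1_{\{w>0\}}$ by $\1_{\{w\ge0\}}$ changes nothing; this is what makes the indicator in $\eqref{eq:u+'}$ unambiguous.

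First I would transfer the Bochner $H^1$-property to a.e.\ line in the $x$-variable. Since $L^2(\Omega)$ is separable, the defining identity
\[
	\int_0^\tau u(t)\,\varphi'(t)\,\d t = -\int_0^\tau \dot u(t)\,\varphi(t)\,\d t \quad \text{in } L^2(\Omega)
\]
need only be tested against a fixed countable family $\{\varphi_k\}\subset C_c^\infty(0,\tau)$ that determines weak derivatives. For each $k$ the identity holds in $L^2(\Omega)$, hence for $x$ outside a null set $N_k$; setting $N:=\bigcup_k N_k$ (still null) we obtain that for every $x\notin N$ the scalar function $t\mapsto u(t,x)$ lies in $H^1(0,\tau)$ with weak derivative $t\mapsto \dot u(t,x)$.

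For a.e.\ $x$ I may then apply the scalar chain rule to $w=u(\cdot,x)$: the function $t\mapsto u^+(t,x)=u(t,x)^+$ belongs to $H^1(0,\tau)$ with
\[
	\partial_t\big(u^+(t,x)\big) = \dot u(t,x)\,\1_{\{u(t,x)>0\}} =: g(t,x) \quad \text{a.e. } t,
\]
so that the scalar weak-derivative identity $\int_0^\tau u^+(t,x)\varphi'(t)\,\d t = -\int_0^\tau g(t,x)\varphi(t)\,\d t$ holds for a.e.\ $x$ and \emph{every} $\varphi\in C_c^\infty(0,\tau)$. The bounds $\abs{u^+(t,x)}\le\abs{u(t,x)}$ and $\abs{g(t,x)}\le\abs{\dot u(t,x)}$ give $u^+,g\in L^2((0,\tau)\times\Omega)$. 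Fixing $\varphi$ and integrating the scalar identity in $x$ (equivalently, using that the Bochner integral commutes with evaluation at $x$) turns it into the $L^2(\Omega)$-valued relation
\[
	\int_0^\tau u^+(t)\,\varphi'(t)\,\d t = -\int_0^\tau g(t)\,\varphi(t)\,\d t,
\]
valid for all $\varphi\in C_c^\infty(0,\tau)$. This is precisely $u^+\in H^1(0,\tau;L^2(\Omega))$ with $(u^+)\dot{}=g$, i.e.\ $\eqref{eq:u+'}$.

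The main obstacle is the measure-theoretic bookkeeping of the first step: one must pin down jointly measurable representatives of $u$ and $\dot u$ on $(0,\tau)\times\Omega$, verify that a single null set $N$ simultaneously handles the countable test family, and check that $\{(t,x):u(t,x)>0\}$ is measurable so that $g$ is a genuine element of $L^2((0,\tau)\times\Omega)$. Once the problem is localized to a.e.\ line, the scalar chain rule and the two pointwise bounds are entirely routine.
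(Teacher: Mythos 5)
Your proof is correct, but it follows a genuinely different route from the paper's. You reduce the vector-valued claim to the classical one-dimensional chain rule by slicing along almost every line $\{x\}\times(0,\tau)$: a countable test family plus Fubini shows that $t\mapsto u(t,x)$ lies in $H^1(0,\tau)$ with derivative $t\mapsto\dot u(t,x)$ for a.e.\ $x$, the scalar Stampacchia lemma is applied on each such line, and the resulting identity is reassembled into the $L^2(\Omega)$-valued one. The paper instead splits the two assertions: membership $u^+\in H^1(0,\tau;L^2(\Omega))$ is obtained in one stroke by citing an invariance result for compositions with Lipschitz maps (\cite[Theorem 5.3]{ADO13}, applied to $f\mapsto f^+$), and the derivative is then identified pointwise -- at a.e.\ $t$ both $u$ and $u^+$ are differentiable in $L^2(\Omega)$, a subsequence of the two-sided difference quotients $\tfrac{1}{\pm h_n}\bigl(u(t\pm h_n)-u(t)\bigr)$ converges a.e.\ in $x$, and a three-case discussion according to the sign of $u(t)(x)$ (the case $u(t)(x)=0$ being settled by comparing the signs of the left and right quotients of $u^+$) yields $(u^+)\dot{}(t)=\dot u(t)\,\1_{\{u(t)>0\}}$. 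Your approach buys self-containedness: no external vector-valued chain rule is needed, and membership and the derivative formula come out together; the cost is exactly the measure-theoretic bookkeeping you flag (jointly measurable representatives, one null set handling a countable test family, measurability of $\{u>0\}$), all of which is standard and can be carried out as you indicate. The paper's argument is shorter given the reference, and its two-sided difference-quotient trick at points where $u(t)(x)=0$ plays the same role as your remark that the indicator is insensitive to the set $\{w=0\}$ because $w'=0$ there.
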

\begin{proof}
The mapping $f\mapsto f^+ \colon L^2(\Omega)\to L^2(\Omega)$ is Lipschitz continuous.
It follows from \cite[Theorem 5.3]{ADO13} that $u^+ \in H^1(0,\tau;L^2(\Omega))$.
In particular, $u$ and $u^+$ are differentiable outside a null set $M \subset [0,\tau]$.
We assume that $0,\tau \in M$. Let $t\in[0,\tau]\setminus M$.
We show that  \eqref{eq:u+'} holds for $t\in[0,\tau]\setminus M$.
Let $h_n \downarrow 0$ as $n\to \infty$.
Passing to a subsequence if necessary we find a null set $N\subset \Omega$ such that
\begin{align*}
\tfrac1{\pm h_n}(u(t\pm h_n)(x)-u(t)(x)) &\to \dot u(t)(x)\\
\tfrac1{\pm h_n}(u^+(t\pm h_n)(x)-u^+(t)(x)) &\to (u^+)\dot{}(t)(x)\\
	u(t\pm h_n)(x) &\to u(t)(x)
\end{align*}
as $n\to\infty$ for all $x\in\Omega\setminus N$.
Now let $x\in\Omega\setminus N$ and consider three cases.
\begin{description}
\item{1st case: $u(t)(x)>0$.}
Then $u(t\pm h_n)(x)>0$ for $n$ sufficiently large.
Thus
\[
(u^+)\dot{}(t)(x) = \lim_{n\to\infty} \tfrac1{\pm h_n}(u^+(t\pm h_n)(x)-u^+(t)(x)) = \dot u(t)(x).
\]
\item{2nd case: $u(t)(x) <0$.}
Then $(u^+)\dot{}(t)(x) = 0$ by the same argument.
\item{3rd case: $u(t)(x)=0$.} Then
\[
(u^+)\dot{}(t)(x) = \lim_{n\to\infty} \tfrac1{\pm h_n}(u^+(t\pm h_n)(x)) = 0
\]
since $h_n>0$ and $-h_n <0$.
\end{description}
\end{proof}

Proposition~\ref{prop:positivity} is somehow analogous to the classical result,
valid for each open set $\Omega \subset \R^d$:
If $v\in H^1(\Omega)$, then $v^+ \in H^1(\Omega)$ and
\[D_j v^+ = \1_{\{ v>0 \}} D_j v \quad (j=1,\dots,d).\]

Now we can prove the result on positive solutions.
We consider the situation of Theorem~\ref{network-wp}.
Let $V$ and $H$ be the Hilbert spaces defined in Section~\ref{section:diffusion_on_network}.
For $v \in H$ we define 
\[
v^+:=(v_j^+)_{j=1,\dots, m}
\] 
and write $v \ge 0$ if $v = v^+$.
Further we set $v^-:=(-v)^+$, then $v=v^+-v^-$ and $v^+, v^- \ge 0$.
Note that if $v \in V$, then also $v^+, v^- \in V$ and 
\begin{equation}\label{eq:positivity}
(v_j^+)'(v_j^-)'=0 \quad (j=1,\dots,m).
\end{equation}
Finally we denote the positive cones by
\[
H_+:=\{u\in H:u\ge0\} \text{ and } V_+:=\{v\in V:v\ge0\}.
\]
\begin{theorem}\label{thm:positivity}
Assume that all the conditions of Theorem~\ref{network-wp} hold and additionally suppose that the functions $b_i$ are Lipschitz continuous, i.e.\ $b_i \in W^{1,\infty}(0,\infty)$. Moreover, let $F \in L^2(0,\tau;H_+)$ and $u^0 \in V_+$.
Then the solution $u$ provided by Theorem~\ref{network-wp} is positive, i.e.\ it takes values in $V_+$.
\end{theorem}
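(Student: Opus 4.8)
The plan is to run the standard ``test against the negative part'' argument, adapted to the non-autonomous, $B(t)$-weighted setting, and to close it with a Gronwall inequality in which the Lipschitz regularity of the $b_j$ is exactly what is needed. First I would record that the solution $u$ from Theorem~\ref{network-wp} satisfies the abstract equation $B(t)\dot u(t)+A(t)u(t)=\widetilde F(t)$ a.e., where $\widetilde F(t):=B(t)F(t)$ (this is the form in which the solution is actually produced in the proof of Theorem~\ref{network-wp}). Since $\widetilde F(t)-B(t)\dot u(t)=A(t)u(t)\in H$, we have $u(t)\in D(A(t))$ for a.e.\ $t$, so that $\scp{A(t)u(t)}{v}_H=\fra(t,u(t),v)$ for all $v\in V$; and because $b_j>0$ and $F(t)\ge0$, also $\widetilde F(t)\ge0$.

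Next, by Proposition~\ref{prop:positivity} applied to $-u$ (viewing $H=L^2(0,1)^m$ as an $L^2$-space over the disjoint union of the edges), $u^-=(-u)^+\in H^1(0,\tau;H)$ with $u^-(t)\in V$ and $(u^-)\dot{}(t)=-\dot u(t)\,\1_{\{u(t)<0\}}$ a.e. I would test the equation against $v=u^-(t)\in V$, write $u=u^+-u^-$, and invoke the orthogonality $\fra(t,u^+(t),u^-(t))=0$, which is immediate from \eqref{eq:positivity}. Combined with $\fra(t,u^-(t),u^-(t))\ge0$ and $\scp{\widetilde F(t)}{u^-(t)}_H\ge0$, this gives the key pointwise inequality
\[
\scp{B(t)\dot u(t)}{u^-(t)}_H=\scp{\widetilde F(t)}{u^-(t)}_H+\fra(t,u^-(t),u^-(t))\ge0\qquad(\text{a.e. }t).
\]

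Then I would introduce the weighted energy $E(t):=\tfrac12\scp{B(t)u^-(t)}{u^-(t)}_H$. Since $t\mapsto B(t)$ is Lipschitz into $\L(H)$ (this is where $b_j\in W^{1,\infty}$ enters) and $u^-\in H^1(0,\tau;H)$, the function $E$ is absolutely continuous, and the product rule yields
\[
\dot E(t)=\tfrac12\sum_{j=1}^m\dot b_j(t)\int_0^1\big(u_j^-(t)\big)^2\,\d x+\scp{B(t)u^-(t)}{(u^-)\dot{}(t)}_H .
\]
Using $(u^-)\dot{}(t)=-\dot u(t)\,\1_{\{u(t)<0\}}$ and that $u^-(t)$ is supported on $\{u(t)<0\}$, the last term equals $-\scp{B(t)\dot u(t)}{u^-(t)}_H\le0$ by the previous step. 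Estimating the first term by $\tfrac12\norm{\dot b}_{L^\infty}\norm{u^-(t)}_H^2$ and using $b_j(t)\ge\varepsilon^2$ to compare $\norm{u^-(t)}_H^2$ with $E(t)$, I obtain $\dot E(t)\le(\norm{\dot b}_{L^\infty}/\varepsilon^2)\,E(t)$ a.e. Since $u^0\in V_+$ gives $u^-(0)=0$, i.e.\ $E(0)=0$, Gronwall forces $E\equiv0$, hence $u^-\equiv0$ and $u(t)\in V_+$ for all $t$.

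The main obstacle is the third step: correctly differentiating the $t$-dependent quadratic form $E$ and identifying $\scp{B(t)u^-(t)}{(u^-)\dot{}(t)}_H$ with $-\scp{B(t)\dot u(t)}{u^-(t)}_H$ through the derivative formula of Proposition~\ref{prop:positivity}. This is precisely where the Lipschitz hypothesis on the $b_j$ is indispensable: it guarantees $\dot b_j\in L^\infty$, so that the ``bad'' term $\tfrac12\sum_j\dot b_j\int_0^1(u_j^-)^2\,\d x$ is dominated by $E(t)$ and the Gronwall argument closes. By contrast, the form-level computation $\fra(t,u^+,u^-)=0$ is routine given \eqref{eq:positivity}.
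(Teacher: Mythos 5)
Your argument is correct and is essentially the paper's proof: test against the negative part, use $\fra(t,u^+,u^-)=0$ from \eqref{eq:positivity} together with Proposition~\ref{prop:positivity} to differentiate the $B$-weighted square norm of $u^-$, and control the $\dot B$ term via the Lipschitz bound on the $b_j$. The only (cosmetic) difference is that you close the estimate with Gronwall applied to $E(t)$, whereas the paper absorbs the same $\dot B$ contribution by rescaling $v=e^{-ct}u$ with $c=\tfrac{1}{2\varepsilon^2}\max_i\norm{\dot b_i}_\infty$ and concludes directly that $\norm{v^-(t)}_H^2\le 0$.
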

\begin{proof}
Let $c :=\frac{1}{2\varepsilon^2} \max_i \norm{\dot b_i}_\infty$, where $\varepsilon>0$ is as in Theorem~\ref{network-wp}.
We set $v(\cdot):= e^{-c \cdot} u(\cdot)$ and $\tilde F(\cdot) = e^{- c \cdot} F(\cdot)$.
Then $F$ is still positive and, since $u$ is a solution of \eqref{eq:cp} we have
	\begin{align*}
		B(t) \dot v(t) &= e^{-c t} (B(t) \dot u(t)-c B(t) u(t))\\
			&= \tilde F(t) - A(t) v(t) - c B(t) v(t).
	\end{align*}
Observe that $v^-(0)=0$ since $u(0)\ge 0$.
Using \eqref{eq:estimate-B} and Proposition~\ref{prop:positivity} we obtain the following estimate for each $t \in [0,\tau]$:
\begin{align*}
	\varepsilon^2 \norm{v^-(t)}_H^2
	&\le \norm{B^{1/2}(t) v^-(t)}_H^2
	= \norm{B^{1/2}(t) v^-(t)}_H^2 - \norm{B^{1/2}(0) v^-(0)}_H^2\\
	&= -2 \int_0^t  \scp{B(s)\dot v(s)}{v^-(s)}_H \ \d s + \int_0^t \scp{\dot B(s)v^-(s)}{v^-(s)}_H \ \d s\\
	&= -2 \int_0^t \scp{\tilde F(s)- A(s) v(s)}{v^-(s)}_H\ \d s \\
	&\quad+ \int_0^t \scp{(\dot B(s) - 2cB(s)) v^-(s)}{v^-(s)}_H\ \d s\\
	&\le -2 \int_0^t \scp{\tilde F(s)}{v^-(s)}_H \ \d s + 2 \int_0^t \fra(s, v(s), v^-(s)) \ \d s\\
	&\le 2 \int_0^t \fra(s, v(s), v^-(s))  \ \d s- 2\int_0^t \fra(s, v^+(s), v^-(s)) \ \d s\\
	&\le -2 \int_0^t \fra(s, v^-(s), v^-(s)) \ \d s \le 0.
	\end{align*}
Here we used that $\fra(s,v^+,v^-)=0$ for all $v\in V$ by \eqref{eq:positivity}.
This shows that $v^-(t)=e^{-ct}u^-(t) = 0$, hence $u(t) \ge 0$ for every $t \in [0,\tau]$.
\end{proof}

\section{Stability}\label{section:stability}

Let $V$ and $H$ be the Hilbert spaces defined in Section~\ref{section:diffusion_on_network}.
Note that $V \underset c \hookrightarrow H$ since $H^1(0,1) \underset c \hookrightarrow L^2(0,1)$ (see \cite{Bre11}).
Recall also the definition of the operator families $(A(t))_{t\in [0,\infty)}$ and $(B(t))_{t\in [0,\infty)}$ from Section~\ref{section:diffusion_on_network}.
We will assume all the conditions of  Corollary~\ref{corollary:network_wp}. Note that then there exists a $\beta\in (0,1)$ such that
\begin{equation}\label{eq:B_bounds}
 \beta \norm{g}_H \le \norm{B^{1/2}(t) g}_H \le \beta^{-1} \norm{g}_H
\end{equation}
 holds for for all $g\in H$ and all $t \in [0,\infty)$, cf.~\eqref{eq:estimate-B}.
Given some $F \in L_{loc}^2([0,\infty);H)$ and $u^0\in V$ we denote by $u$ the solution to the abstract Cauchy problem 
\begin{equation*}
	\CP \left\{ 
		\begin{aligned}
			B(t)\dot u(t) + A(t)u(t) &= F(t) \quad (\text{a.e. } t \in [0,\infty) )\\
				u(0) &= u^0,
		\end{aligned}
	\right.
\end{equation*}
given by Corollary~\ref{corollary:network_wp}.
In this section we discuss the asymptotic behavior of $u$ for several cases of the operator family $(B(t))_{t\in [0,\infty)}$.

Since $V \underset c \hookrightarrow H$ we can apply Theorem~\ref{theorem:spectral_representation} to the form $\fra(t,\cdot,\cdot)$ for any $t \in[0,\infty)$.
Thus the spectrum of the operator $A(t)$  for any $t \in[0,\infty)$ consists of an increasing sequence of eigenvalues $\lambda_1(t)\le\lambda_2(t)\le\cdots$ with the corresponding orthonormal basis $\{e_n(t): n\in\N\}$ of eigenvectors.
The following properties of the first and second eigenvalue and the first eigenvector of $A(t)$ are essential for the analysis of the asymptotic behavior of the solution $u$.
\begin{lemma}
	Let $t\in [0,\infty)$. 
	The normalized  first eigenvector of $A(t)$ equals  $e_1(t) \equiv\frac 1 {\sqrt m} \1$ and the first eigenvalue $\lambda_1(t)\equiv0$.
	The second eigenvalue $\lambda_2(t)$ is positive.
\end{lemma}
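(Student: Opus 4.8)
The plan is to fix $t \in [0,\infty)$ and treat the three assertions in order, reducing everything to elementary properties of the form together with one genuinely structural input, the connectedness of $G$.

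First I would pin down $\lambda_1(t)$ and its eigenvector. Since $\mu_j(t) > 0$, the form is nonnegative,
\[
	\fra(t,u,u) = \sum_{j=1}^m \mu_j(t) \int_0^1 \abs{u_j'(x)}^2 \ \d x \ge 0 \quad (u \in V),
\]
so by the spectral representation (Theorem~\ref{theorem:spectral_representation}), which gives $\fra(t,u,u) \ge \lambda_1(t) \norm{u}_H^2$, we get $\lambda_1(t) \ge 0$. To see that $0$ is attained, note that the constant function $\1$ (equal to $1$ on every edge) takes the same value at each vertex, hence is continuous on the graph and lies in $V$; moreover $\1 \in H^2(0,1)^m$ with $\1'' = 0$, and the Kirchhoff condition $\Phi_w^-(t) \cdot \1'(0) = \Phi_w^+(t) \cdot \1'(1)$ holds trivially since $\1' \equiv 0$. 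By Proposition~\ref{asoc-op} we thus have $\1 \in D(A(t))$ and $A(t)\1 = 0$, so $0$ is an eigenvalue and, being the smallest possible, $\lambda_1(t) = 0$. Since $\norm{\1}_H^2 = m$, the normalized first eigenvector is $e_1(t) = \tfrac{1}{\sqrt m}\1$.

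Second, to obtain $\lambda_2(t) > 0$ I would show that the eigenvalue $0$ is simple, i.e.\ $\ker A(t) = \ospan\{\1\}$. If $u \in \ker A(t)$, then $\fra(t,u,u) = (A(t)u, u)_H = 0$, so $\sum_{j=1}^m \mu_j(t) \int_0^1 \abs{u_j'}^2 \, \d x = 0$; as each $\mu_j(t) > 0$, every component $u_j$ is constant on its edge, say $u_j \equiv c_j$. The continuity-on-the-graph condition \eqref{eq:continuous_on_graph} then forces $c_j = c_l$ whenever $j, l \in \Gamma(\mv_i)$, that is, whenever the edges $\me_j$ and $\me_l$ are incident to a common vertex $\mv_i$.

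Third, I invoke connectedness of $G$: any two edges are joined by a chain of edges in which consecutive members share a vertex, so propagating the equalities $c_j = c_l$ along such a chain shows all the constants coincide, whence $u = c\,\1$. Thus $\ker A(t)$ is one-dimensional, the eigenvalue $0$ has multiplicity exactly $1$, and therefore $\lambda_2(t) > \lambda_1(t) = 0$. The hard part of the argument is precisely this last connectedness step: it is the only place where the graph structure enters essentially, converting agreement of the edgewise constants at shared vertices into global constancy, and it is exactly what would fail for a disconnected graph (where the kernel would have dimension equal to the number of connected components). Everything else is a routine consequence of $\mu_j(t) > 0$ and the computation of $D(A(t))$ in Proposition~\ref{asoc-op}; I would only take care to apply \eqref{eq:continuous_on_graph} with the correct reading of $u_j(\mv_i)$ as the value of $u_j$ at whichever endpoint of $\me_j$ equals $\mv_i$.
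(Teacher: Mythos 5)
Your proof is correct and follows essentially the same route as the paper: nonnegativity of the form gives $\lambda_1(t)\ge 0$, the constant function $\1$ is an explicit eigenvector for the eigenvalue $0$, and positivity of $\lambda_2(t)$ comes down to the fact that $\fra(t,u,u)=0$ forces $u$ to be edgewise constant and hence, by continuity on the graph and connectedness, a multiple of $\1$. The only difference is presentational: you compute $\ker A(t)$ explicitly and spell out the connectedness step, whereas the paper argues directly that $e_2(t)\perp e_1$ cannot be constant and leaves that step implicit.
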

\begin{proof}
	Note that $(A(t)u,u)_H = \fra(t,u,u) \ge 0$ for all $t\in[0,\infty)$ and all $u \in D(A(t))$, thus the first eigenvalue is non-negative by Theorem~\ref{theorem:spectral_representation}.
	Observe that $ e_1(t)\in D(A(t))$ and $A(t)e_1(t)=0$ for all $t \in [0,\infty)$, which shows the assertions on $e_1(t)$ and $\lambda_1(t)$.
	
	Since $(e_2(t),e_1(t))_H=0$ we have that $e_2(t)$ is not constant, i.e.\ $\norm{e_2'(t)}_H \neq 0$. 
Thus $\lambda_2(t)=(A(t)e_2(t),e_2(t))_H=\fra(t,e_2(t),e_2(t))>0$ by 
\eqref{form}. 
\end{proof}
Since the eigenvector $e_1(t)$ does not depend on $t$ we set 
$$e_1 := \frac 1 {\sqrt m} \1.$$
We can improve the result on pointwise positivity of $\lambda_2(\cdot)$.
\begin{proposition}
	There exists a constant $\underline \lambda_2 > 0$ such that $\lambda_2(t) \ge \underline \lambda_2$ for all $t \in [0, \infty)$.
\end{proposition}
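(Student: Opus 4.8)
The plan is to use the variational (min-max) characterization of the second eigenvalue, crucially exploiting that the first eigenvector $e_1 = \frac{1}{\sqrt m}\1$ is \emph{independent of $t$}. Concretely, I would first record that for each fixed $t$ the spectral representation (Theorem~\ref{theorem:spectral_representation}) gives, for $u \perp e_1$,
\[
	\fra(t,u,u) = \sum_{n\ge 2} \lambda_n(t)\,\scp{u}{e_n(t)}_H^2 \ge \lambda_2(t)\sum_{n\ge 2}\scp{u}{e_n(t)}_H^2 = \lambda_2(t)\,\norm{u}_H^2,
\]
with equality attained at $u = e_2(t)$. Hence one obtains the clean characterization
\[
	\lambda_2(t) = \inf\Big\{ \tfrac{\fra(t,u,u)}{\norm{u}_H^2} : u \in V,\ \scp{u}{e_1}_H = 0,\ u \neq 0 \Big\}.
\]
The decisive structural feature is that the admissible set $\{u \in V : \scp{u}{e_1}_H = 0\}$ does not depend on $t$, because $e_1(t) \equiv e_1$ by the preceding Lemma.

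Next I would exploit the uniform lower bound $\mu_j(t) \ge \varepsilon$ from Corollary~\ref{corollary:network_wp}. Introduce the fixed reference form $\fra_0(u,u) := \sum_{j=1}^m \int_0^1 \abs{u_j'(x)}^2\,\d x$ (the case $\mu_j \equiv 1$). Then $\fra(t,u,u) \ge \varepsilon\,\fra_0(u,u)$ for all $t \in [0,\infty)$ and all $u \in V$. Let $A_0$ be the operator associated with $\fra_0$ on $H$ and $\lambda_2^0$ its second eigenvalue; the preceding Lemma (applied with constant weights $\mu_j \equiv 1$) guarantees $\lambda_2^0 > 0$, and $A_0$ shares the first eigenvector $e_1$. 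Since the orthogonality constraint is the same for $\fra$ and $\fra_0$, taking the infimum over the common admissible set preserves the pointwise inequality:
\[
	\lambda_2(t) = \inf_{u \perp e_1,\, u \neq 0} \frac{\fra(t,u,u)}{\norm{u}_H^2} \ge \varepsilon \inf_{u \perp e_1,\, u \neq 0} \frac{\fra_0(u,u)}{\norm{u}_H^2} = \varepsilon\,\lambda_2^0 \quad (t \in [0,\infty)).
\]
One then sets $\underline\lambda_2 := \varepsilon\,\lambda_2^0 > 0$, which is independent of $t$, completing the proof.

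The only genuinely substantive point—and the step I would flag as the potential obstacle—is the naive worry that the infimum of $\lambda_2(\cdot)$ over the noncompact interval $[0,\infty)$ might be zero even though $\lambda_2(t) > 0$ pointwise; continuity alone (Theorem~\ref{theorem:continuity_of_eigenvalues}) is insufficient on an unbounded interval. This concern is dissolved entirely by the $t$-independence of $e_1$ together with the uniform ellipticity $\mu_j(t) \ge \varepsilon$: these two facts reduce the whole family of constrained Rayleigh-quotient problems to a single fixed eigenvalue problem, scaled by $\varepsilon$, so no compactness-in-$t$ argument is needed. I would therefore present the $t$-independence of the admissible set as the conceptual heart of the argument, with the ellipticity bound supplying the uniform constant.
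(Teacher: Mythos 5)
Your proof is correct, but it follows a genuinely different route from the paper's. The paper argues by contradiction: it assumes $\lambda_2(t_n)\to 0$, handles accumulation at a finite $t$ by the continuity of $\lambda_2(\cdot)$ (Theorem~\ref{theorem:continuity_of_eigenvalues}), and handles $t_n\uparrow\infty$ by a compactification trick --- passing to a subsequence along which the bounded coefficients $\mu_j(t_n)$ converge, re-parametrizing via $t\mapsto t^{-1}$ to build continuous coefficients $\tilde\mu_j$ on the compact interval $[0,t_1^{-1}]$, and then invoking continuity of the second eigenvalue of the auxiliary form at $0$ to reach a contradiction. You instead use the Courant--Fischer characterization $\lambda_2(t)=\inf\{\fra(t,u,u)/\norm{u}_H^2 : u\in V,\ \scp{u}{e_1}_H=0,\ u\neq 0\}$, which is legitimate here because the admissible set is $t$-independent (as $e_1(t)\equiv\frac{1}{\sqrt m}\1$) and because $\lambda_n(t)\ge\lambda_2(t)$ for $n\ge2$ makes the spectral-series estimate valid irrespective of multiplicities; combined with the pointwise domination $\fra(t,u,u)\ge\varepsilon\,\fra_0(u,u)$ coming from $\mu_j(t)\ge\varepsilon$, this yields the explicit bound $\lambda_2(t)\ge\varepsilon\lambda_2^0$. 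Your argument is more elementary and more informative: it avoids Theorem~\ref{theorem:continuity_of_eigenvalues} and the Appendix entirely, needs no subsequence extraction, and produces a concrete constant $\underline\lambda_2=\varepsilon\lambda_2^0$ rather than a pure existence statement. What the paper's argument buys in exchange is a template that does not rely on comparing the form to a single fixed reference form, only on boundedness and continuity of the coefficients together with norm-resolvent continuity of the eigenvalues; but for the specific weighted form \eqref{form} your comparison argument is cleaner and, in my view, preferable.
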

\begin{proof}
	By Theorem~\ref{theorem:continuity_of_eigenvalues} the function $\lambda_2\colon [0,\infty) \to [0,\infty)$ is continuous.
	Suppose that there exists a sequence $(t_n)_{n\in\N} \subset [0,\infty)$ such that $\lambda_2(t_n) \to 0$.
	After taking a subsequence we may assume that $t_n \to t \in [0,\infty]$ as $n \to \infty$.
	If $t$ is finite $\lambda_2(t_n) \to \lambda_2(t) > 0$ as $n \to \infty$ which is a contradiction.
	
	For the case $t = \infty$ we also may assume that $t_n \uparrow \infty$ (after taking a subsequence). 
	Recall the definition of the form $\fra$ in \eqref{form}. 
	Since the sequences $(\mu_j(t_n))_{n\in\N}$ are bounded we may assume (after taking a subsequence) that they converge.
	Define continuous functions $\tilde \mu_j\colon [0, t_1^{-1}]\to [\varepsilon,\varepsilon^{-1}]$ with $\tilde \mu_j(t_n^{-1})=\mu_j(t_n)$ 
	where $\varepsilon > 0$ is chosen as in Corollary~\ref{corollary:network_wp}. 
	For example define $\tilde \mu_j$ affine on the intervals $[t_{n}^{-1}, t_{n-1}^{-1}]$ and set $\tilde \mu_j(0)=\lim_{n\to \infty} \mu_j(t_n)$, 
	then these functions are continuous on $(0,t_1^{-1}]$ by definition and continuous in $0$ by a sandwich argument.
	For these functions we define the form $\tilde\fra$ analoguos to $\fra$. 
	We denote by $\tilde\A(t)$ the associated operator with the form $\tilde\fra(t,\cdot,\cdot)$ and by $\tilde A(t)$ its part on $H$. 
	The corresponding second eigenvalue of $\tilde A(t)$ is denoted by $\tilde \lambda_2(t)$. 
	Now observe that $\tilde\fra(t_n^{-1},u,v)=\fra(t_n, u,v)$ 
	and therefore $\tilde\lambda_2(t_n^{-1})=\lambda_2(t_n)$, hence $\tilde\lambda_2(t_n^{-1}) = \lambda_2(t_n) \to 0.$
	On the other hand continuity of $\tilde\lambda_2$ (cf.~Theorem~\ref{theorem:continuity_of_eigenvalues} and Remark~\ref{remark:no_Lip}) implies $\tilde \lambda_2(t_n^{-1}) \to \tilde \lambda_2(0) > 0$, which is a contradiction.
\end{proof}

\subsection{Stability in the case $B\equiv Id$}\label{section:stab1}

In this subsection we consider the case that the operators $B(t)$ in $\CP$ equal the identity operator for all $t\in [0,\infty)$.
For the system \eqref{diff-sys}, \eqref{diff-sys-boundary} from the introduction this means that
\[
	\mu_j(t) = c_j(t) \quad (t \ge 0, j=1,\dots,m).
\]
Thus we have conservation of mass, i.e.~$\scp {u(t)} {e_1}_H$ is constant for each solution $u$ provided that $\scp{F(t)}{e_1}_H \equiv 0$.

\begin{proposition}\label{proposition:conservation_of_mass}
	Let  $u^0\in V$, $F \in L^2_{loc}([0, \infty);H)$ with $\scp{F(t)}{e_1}=0$ for  a.e.~$t\ge 0$, and let $u$ be the solution of $\CP$. 
Then $$\scp {u(t)} {e_1}_H = \scp {u^0} {e_1}_H =
	 \frac{1}{\sqrt{m}} \scp{u^0}{\1}_H\quad (a.e.~t \in [0,\infty)).$$
\end{proposition}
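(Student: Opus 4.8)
Since we are in the case $B\equiv\mathrm{Id}$, the problem $\CP$ reduces to $\dot u(t)+A(t)u(t)=F(t)$ a.e., with $u(0)=u^0$. The plan is to show that the scalar function $g(t):=\scp{u(t)}{e_1}_H$ is constant on $[0,\infty)$ and then read off its value at $t=0$. The natural first step is to exploit regularity: by Corollary~\ref{corollary:network_wp} we have $u\in H^1_{loc}([0,\infty);H)$, so $g\in H^1_{loc}([0,\infty))$ and
\[
	g'(t)=\scp{\dot u(t)}{e_1}_H \quad(\text{a.e. }t\ge 0).
\]

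Next I would substitute the equation $\dot u(t)=F(t)-A(t)u(t)$ to obtain, for a.e.\ $t$,
\[
	g'(t)=\scp{F(t)}{e_1}_H-\scp{A(t)u(t)}{e_1}_H.
\]
The first term vanishes by the hypothesis $\scp{F(t)}{e_1}=0$. For the second term, note that since the equation holds a.e.\ we have $A(t)u(t)\in H$, i.e.\ $u(t)\in D(A(t))$ for a.e.\ $t$; hence by the definition of the operator associated with $\fra(t,\cdot,\cdot)$ on $H$ we may rewrite $\scp{A(t)u(t)}{e_1}_H=\fra(t,u(t),e_1)$, using that $e_1=\tfrac1{\sqrt m}\1\in V$. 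Because $e_1$ is constant in the spatial variable, its derivative vanishes, so by the explicit form \eqref{form} we get $\fra(t,u(t),e_1)=0$. (Equivalently, one may invoke self-adjointness of $A(t)$ together with $A(t)e_1=0$ to move $A(t)$ onto $e_1$.) Thus $g'(t)=0$ for a.e.\ $t$.

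An $H^1_{loc}$ function with a.e.\ vanishing derivative is constant on every interval $[0,\tau]$, hence constant on $[0,\infty)$, so $g(t)=g(0)=\scp{u^0}{e_1}_H=\tfrac1{\sqrt m}\scp{u^0}{\1}_H$ for a.e.\ $t$, which is the claim. The only delicate points are the interchange of differentiation with the inner product and the assertion $u(t)\in D(A(t))$ for a.e.\ $t$; both are immediate consequences of the maximal regularity furnished by Theorem~\ref{wp}, so I expect no genuine obstacle here — the proposition is essentially a direct computation built on the fact that $e_1$ is a time-independent eigenvector for the eigenvalue $\lambda_1(t)\equiv 0$.
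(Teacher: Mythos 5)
Your proof is correct and takes essentially the same route as the paper: the authors likewise write $\scp{u(t)}{e_1}_H-\scp{u^0}{e_1}_H=\int_0^t\scp{\dot u(s)}{e_1}_H\,\d s$, substitute the equation, and use $\scp{A(s)u(s)}{e_1}_H=\fra(s,u(s),e_1)=0$ because $e_1$ is constant. Your extra remarks on $u(t)\in D(A(t))$ a.e.\ and the $H^1_{loc}$ regularity of $t\mapsto\scp{u(t)}{e_1}_H$ only make explicit what the paper leaves implicit.
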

\begin{proof}
	Let $t \ge 0$. Then
	\begin{align*}
		\scp {u(t)}{e_1}_H - \scp {u^0}{e_1}_H = \int_0^t \scp{\dot u(s)}{e_1}_H \ \d s &= \int_0^t \scp{F(s) - A(s) u(s)}{e_1}_H \ \d s\\
						    = \int_0^t \left(\scp{F(s)}{e_1}_H - \fra(s,u(s),e_1) \right)\ \d s &= 0. {\tag*{\qedhere}}
	\end{align*}
\end{proof}

We are now in the position to prove  that in  case $B\equiv Id$ and $F \in L^2(0,T;H)$ the solution $u$ of $\CP$ consists of a part that is constant
on the graph and a part converging exponentially to $0$.
In particular $u(t)$ converges as $t \to \infty$.

\begin{theorem}\label{thm:stability}
	Let  $u^0 \in V$ and $F \in L^2(0,\infty; H)$ such that
	\[
		F_\infty :=\lim_{t\to\infty} \int_0^t \scp{F(s)}{e_1} \ \d{s}
	\]
	exists. Set $w(t) := \scp{u_0}{e_1}_H e_1 +  \int_0^t \scp{F(s)}{e_1}_H \, \d{s} \,e_1$.
	Then the solution $u$ of $\CP$ can be decomposed as $$u(t) = w(t) + \tilde u(t) \quad (t \ge 0)$$ where
	$w(t)$ converges to $\scp{u_0}{e_1}_H e_1 + F_\infty e_1$ as $t \to \infty$ 
	and $\tilde u(t) := u(t)-w(t)$ is exponentially stable.
\end{theorem}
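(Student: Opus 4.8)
The plan is to split $u$ by the orthogonal projection onto the time-independent first eigenvector. Let $P\colon H\to H$ be the projection $Pv=\scp{v}{e_1}_H e_1$ and put $H_0:=(I-P)H=\{v\in H:\scp{v}{e_1}_H=0\}$. First I would identify $w$ with $Pu$. Since $\fra$ is symmetric and $A(t)e_1=0$, for $u(t)\in D(A(t))$ we have $\fra(t,u(t),e_1)=\scp{u(t)}{A(t)e_1}_H=0$, so
\[
\tfrac{d}{dt}\scp{u(t)}{e_1}_H=\scp{\dot u(t)}{e_1}_H=\scp{F(t)}{e_1}_H-\fra(t,u(t),e_1)=\scp{F(t)}{e_1}_H
\]
for a.e.\ $t$ (exactly the computation behind Proposition~\ref{proposition:conservation_of_mass}). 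Integrating gives $\scp{u(t)}{e_1}_H=\scp{u^0}{e_1}_H+\int_0^t\scp{F(s)}{e_1}_H\,\d s$, that is $Pu(t)=w(t)$ since $\norm{e_1}_H=1$. The convergence $w(t)\to\scp{u_0}{e_1}_H e_1+F_\infty e_1$ is then immediate from the assumed existence of $F_\infty$, and $\tilde u(t)=(I-P)u(t)$.

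Next I would derive the equation governing $\tilde u$ on $H_0$. Because $A(t)$ is self-adjoint with $A(t)e_1=0$, the complement $H_0$ is invariant: $\scp{A(t)v}{e_1}_H=\scp{v}{A(t)e_1}_H=0$, so $PA(t)=0$ on $D(A(t))$, and together with $A(t)w(t)=0$ this gives $(I-P)A(t)u(t)=A(t)u(t)=A(t)\tilde u(t)$. Applying $(I-P)$ to the equation in $\CP$ therefore yields
\[
\dot{\tilde u}(t)+A(t)\tilde u(t)=\tilde F(t),\qquad \tilde u(0)=(I-P)u^0,
\]
with $\tilde F:=(I-P)F\in L^2(0,\infty;H)$ taking values in $H_0$. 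As $u\in H^1_{loc}$ and $w\in H^1_{loc}$, also $\tilde u\in H^1_{loc}([0,\infty);H)$, so the product rule for $H$-valued $H^1$ functions applies, and $\tilde u(t)\in D(A(t))$ for a.e.\ $t$.

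The decay comes from an energy estimate on $H_0$ using a uniform spectral gap. By the spectral representation of Theorem~\ref{theorem:spectral_representation} and $\tilde u(t)\perp e_1=e_1(t)$,
\[
\fra(t,\tilde u(t),\tilde u(t))=\sum_{n\ge 2}\lambda_n(t)\scp{\tilde u(t)}{e_n(t)}_H^2\ge\lambda_2(t)\norm{\tilde u(t)}_H^2\ge\underline\lambda_2\norm{\tilde u(t)}_H^2,
\]
where $\underline\lambda_2>0$ is the uniform lower bound established above. Differentiating and using $\fra(t,\tilde u,\tilde u)=\scp{A(t)\tilde u}{\tilde u}_H$ gives, for a.e.\ $t$,
\[
\tfrac12\tfrac{d}{dt}\norm{\tilde u(t)}_H^2=\scp{\tilde F(t)}{\tilde u(t)}_H-\fra(t,\tilde u(t),\tilde u(t))\le\scp{\tilde F(t)}{\tilde u(t)}_H-\underline\lambda_2\norm{\tilde u(t)}_H^2.
\]
When $\tilde F\equiv0$ this reads $\tfrac{d}{dt}\norm{\tilde u}_H^2\le-2\underline\lambda_2\norm{\tilde u}_H^2$, and Gronwall yields the clean bound $\norm{\tilde u(t)}_H\le e^{-\underline\lambda_2 t}\norm{\tilde u(0)}_H$. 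For general $\tilde F$, Young's inequality absorbs half the gap and Gronwall gives
\[
\norm{\tilde u(t)}_H^2\le e^{-\underline\lambda_2 t}\norm{\tilde u(0)}_H^2+\tfrac1{\underline\lambda_2}\int_0^t e^{-\underline\lambda_2(t-s)}\norm{\tilde F(s)}_H^2\,\d s,
\]
and since $\norm{\tilde F}_H^2\in L^1(0,\infty)$ the convolution term tends to $0$ (split the integral at $t/2$), so $\tilde u(t)\to0$.

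The main obstacle I expect is the treatment of the inhomogeneity together with the precise sense of \emph{exponential stability}: the gap $\underline\lambda_2$ produces genuine exponential decay for the homogeneous dynamics, while an $L^2$-forcing only forces decay to $0$ at a rate governed by the decay of $\tilde F$. Making this rigorous rests on (i) the uniform gap $\underline\lambda_2>0$ from the proposition proved above, itself a consequence of the continuity of $\lambda_2(\cdot)$ in Theorem~\ref{theorem:continuity_of_eigenvalues}, and (ii) justifying the pointwise-in-$t$ spectral estimate $\fra(t,\tilde u,\tilde u)\ge\underline\lambda_2\norm{\tilde u}_H^2$ and the product rule for the $H$-valued $H^1$ function $\tilde u$.
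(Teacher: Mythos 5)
Your proposal is correct and follows essentially the same route as the paper: the same splitting $u=w+\tilde u$ with $w$ the $e_1$-component determined by $\tfrac{\d}{\d t}\scp{u(t)}{e_1}_H=\scp{F(t)}{e_1}_H$ (the computation of Proposition~\ref{proposition:conservation_of_mass}), the reduced equation $\dot{\tilde u}+A\tilde u=F-\scp{F}{e_1}_He_1$, the spectral-gap estimate $\fra(t,\tilde u,\tilde u)\ge\lambda_2(t)\norm{\tilde u}_H^2\ge\underline\lambda_2\norm{\tilde u}_H^2$, and an energy identity closed by Young and Gronwall. The one substantive difference is the last step: you keep Gronwall in its Duhamel form $\norm{\tilde u(t)}_H^2\le e^{-\underline\lambda_2 t}\norm{\tilde u(0)}_H^2+\tfrac1{\underline\lambda_2}\int_0^te^{-\underline\lambda_2(t-s)}\norm{\tilde F(s)}_H^2\,\d s$, which for $F\in L^2(0,\infty;H)$ yields $\tilde u(t)\to0$ but an exponential rate only to the extent that $\int_t^\infty\norm{\tilde F(s)}_H^2\,\d s$ decays exponentially; the paper instead asserts the bound $\norm{\tilde u(t)}_H^2\le\bigl(\norm{\tilde u(0)}_H^2+\tfrac1{2\varepsilon}\int_0^t\norm{\tilde F(s)}_H^2\,\d s\bigr)\exp\bigl(-2\int_0^t(\lambda_2(s)-\varepsilon)\,\d s\bigr)$, i.e.\ with the kernel $e^{-(B(t)-B(s))}$ replaced by $e^{-B(t)}$ inside the integral, and this stronger form does not follow from the differential inequality (a forcing concentrated near time $t$ violates it). So your cautious reading --- clean exponential decay for the homogeneous part, decay to zero at a rate governed by the tail of $\tilde F$ otherwise --- is the accurate one; to obtain the literal exponential-stability claim one needs a decay hypothesis on $F$ of the kind appearing in the remark following the theorem.
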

\begin{proof}
	Note that $w(t) \in D(A(t))$ and $A(t)w(t)= 0$ for all $t \ge 0$, thus
	\[
		\dot {\tilde u} + A \tilde u = F - \dot w = F - \scp{F}{e_1}_H e_1 =: \tilde F, \quad \tilde u(0) = u_0 -\scp{u_0}{e_1}e_1.
	\]
	By Proposition~\ref{proposition:conservation_of_mass} we have $\scp{\tilde u(\cdot)}{e_1}_H \equiv 0$. 
	This identity together with
	Theorem~\ref{theorem:spectral_representation} shows that
	\[
		\fra(t,\tilde u(t),\tilde u(t)) \ge \lambda_2(t) \norm{\tilde u(t)}_H^2 \quad (a.e.~t \in [0,\infty)).
	\]
	Let $\varepsilon >0$, then by the product rule, Young's inequality and the above estimate
	\begin{align*}
		&\norm{\tilde u(t)}_H^2 - \norm{\tilde u(0)}_H^2\\
		&\quad= \int_0^t 2 \scp{\dot {\tilde u} (s)}{\tilde u(s)}_H \ \d s\\
		&\quad= 2 \int_0^t \scp{\tilde F(s)- A(s)\tilde u(s)}{\tilde u(s)}_H \ \d s\\
		&\quad= 2 \int_0^t \scp{\tilde F(s)}{\tilde u(s)}_H \ \d s - 2 \int_0^t \fra(s, \tilde u(s), \tilde u(s)) \ \d s\\
		&\quad\le \tfrac 1 {2\varepsilon} \int_0^t \norm{\tilde F(s)}_H^2 \ \d s + 2 \varepsilon \int_0^t \norm{\tilde u(s)}_H^2 \ \d s - 2 \int_0^t \lambda_2(s) \norm{\tilde u(s)}_H^2 \ \d s\\
		&\quad= \tfrac 1 {2\varepsilon} \int_0^t \norm{\tilde F(s)}_H^2 \ \d s -2 \int_0^t (\lambda_2(s)-\varepsilon) \norm{\tilde u(s)}_H^2 \ \d s.
	\end{align*}
	By Gronwall's inequality we obtain
	\begin{align*}
		\norm{\tilde u(t)}_H 
			&\le \Big(\norm{u^0-\tilde u^0}_H^2 + \tfrac 1 {2\varepsilon} \int_0^t \norm{\tilde F(s)}_H^2 \ \d s \Big)^{1/2} \exp\Big(\!- \int_0^t (\lambda_2(s)-\varepsilon) \ \d s \Big)\\
			&\le \Big(\norm{u^0-\tilde u^0}_H^2 + \tfrac 1 {2\varepsilon} \int_0^t \norm{F(s)}_H^2 \ \d s \Big)^{1/2} \exp\Big(\!- t (\underline\lambda_2-\varepsilon) \Big)
	\end{align*}
	for all $t \ge 0$. This shows exponential stability of $\tilde u$ if we choose $\varepsilon < \underline \lambda_2$.
\end{proof}
\begin{remark}
	If the function $F$ is only in $L^2_{loc}([0, \infty);H)$ with 
	$$\norm{F}_{L^2(0,t;H)} \le C e^{t(\underline\lambda_2 - \delta)} \quad (t \ge 0)$$
	or even
	$$\norm{F}_{L^2(0,t;H)} \le C \exp\Big( \int_0^t (\lambda_2(s)-\delta) \ \d s \Big) \quad (t \ge 0)$$
	for some $\delta >0$ and $C \ge 0$, 
	then the assertion of Theorem~\ref{thm:stability} still holds.
\end{remark}

Observe that the  speed of the convergence of the exponentially stable part depends on the value of $\underline\lambda_2$ which is strongly related to the structure of the network (for a discussion on these connections we refer to \cite[Section 5]{KMS07}).

\subsection{Stability in the case $\dot b_i \le 0$}\label{section:stab2}
In this subsection we consider the case that the functions $b_i$ are Lipschitz continuous, i.e.\ $b_i \in W^{1,\infty}(0,\infty)$, 
and that $\dot b_i(t) \le 0$ for a.e.\ $t \in [0, \infty)$.
Again we denote by $u$ the unique solution of $\CP$.
Conservation of mass does not hold in this situation, i.e.\ $\scp {u(\cdot)} {e_1}_H$ may vary in time.
Thus we cannot expect the same result as before, but we can still show a nice asymptotic bahaviour of the solution. 

\begin{theorem}\label{theorem:exp_stability2}
	Let $u^0 \in V$ and  $F \in L^2(0,\infty; H)$ with $\scp{F(t)}{e_1}_H = 0$ for a.e.\ $t \ge 0$.
	Then the solution $u$ of $\CP$ can be decomposed as $$u=\tilde u_1 +\tilde u, $$	
	where $\tilde u_1(t):= \scp{u(t)}{e_1}_H e_1 = \frac{1}{m}\scp{u(t)}{\1}_H \1$ converges to an equilibrium in $H$ as $t \to \infty$ 
	and  $\tilde u(\cdot) := u(\cdot) - \tilde u_1(\cdot)$ behaves exponentially stable.
\end{theorem}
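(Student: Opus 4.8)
The plan is to split $u$ into its $H$-projection onto the (time-independent) ground state and a transversal remainder: set $\alpha(t):=\scp{u(t)}{e_1}_H$, $\tilde u_1:=\alpha e_1$ and $\tilde u:=u-\tilde u_1$, so that $\scp{\tilde u(t)}{e_1}_H=0$ for all $t$. Since $e_1$ is the first eigenvector of every $A(t)$ and does not depend on $t$, the spectral representation (Theorem~\ref{theorem:spectral_representation}) gives the uniform spectral gap $\fra(t,\tilde u,\tilde u)\ge\lambda_2(t)\norm{\tilde u}_H^2\ge\underline\lambda_2\norm{\tilde u}_H^2$. Testing $\CP$ against $e_1$ and using $A(t)e_1=0$ and $\scp{F(t)}{e_1}_H=0$ yields the conservation identity $\scp{\dot u(t)}{B(t)e_1}_H=0$, while inserting the decomposition into $\CP$ gives the residual equation $B(t)\dot{\tilde u}+A(t)\tilde u=F-\dot\alpha\,B(t)e_1$. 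The one structural fact I will exploit throughout is that $\dot b_j\le 0$ together with $b_j\ge\varepsilon^2$ forces $\dot b_j\in L^1(0,\infty)$, since $\int_0^\infty(-\dot b_j)=b_j(0)-\lim_t b_j(t)<\infty$; equivalently $\dot B(t)\le 0$ and $t\mapsto B(t)$ has $L^1$ derivative.

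First I would show that $u$ stays bounded in $H$. Testing $\CP$ with $u$ and using $\scp{F}{u}_H=\scp{F}{\tilde u}_H$ (as $\scp{F}{e_1}_H=0$), $\scp{\dot B u}{u}_H\le 0$, and $\fra(t,u,u)=\fra(t,\tilde u,\tilde u)\ge\underline\lambda_2\norm{\tilde u}_H^2$, a Young estimate on $\scp{F}{\tilde u}_H$ gives $\tfrac{d}{dt}\scp{B u}{u}_H\le\tfrac1{2\underline\lambda_2}\norm{F}_H^2$; since $F\in L^2(0,\infty;H)$ this bounds $\scp{B(t)u(t)}{u(t)}_H$, whence $\sup_t\norm{u(t)}_H<\infty$ and in particular $\alpha$ is bounded. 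This breaks the apparent circularity between the two parts of the decomposition.

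The heart of the proof is the exponential decay of $\tilde u$, and the main obstacle is the cross term $-\dot\alpha\scp{B e_1}{\tilde u}_H$ that appears when one differentiates the natural energy $\scp{B\tilde u}{\tilde u}_H$: it is present precisely because the $H$-orthogonal projection onto $e_1$ does not commute with $B(t)$. I would eliminate it by replacing this energy with the squared $B(t)$-distance of $\tilde u$ to $\R e_1$, namely $\mathcal E(t):=\tfrac12\scp{B\tilde u}{\tilde u}_H-\tfrac{\gamma^2}{2q}$ with $\gamma:=\scp{B e_1}{\tilde u}_H$ and $q:=\scp{B e_1}{e_1}_H$. Two features make this work. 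On one hand $\mathcal E=\tfrac12\min_{s}\norm{\tilde u-s e_1}_{B}^2\ge\tfrac{\beta^2}2\min_s\norm{\tilde u-se_1}_H^2=\tfrac{\beta^2}2\norm{\tilde u}_H^2$, the last equality holding because $\tilde u\perp_H e_1$; together with $\mathcal E\le\tfrac1{2\beta^2}\norm{\tilde u}_H^2$ this shows $\mathcal E\simeq\norm{\tilde u}_H^2$. On the other hand, differentiating $\mathcal E$ and using the residual equation, the cross term combines with $\tfrac{d}{dt}(\gamma^2/2q)$ and the identity $\dot\alpha q+\dot\gamma=\scp{\tilde u}{\dot B e_1}_H$ to collapse into $-\gamma\scp{\tilde u}{\dot B e_1}_H/q+\gamma^2\dot q/2q^2$, both terms being bounded by $g(t)\norm{\tilde u}_H^2$ with $g\in L^1(0,\infty)$ thanks to $\dot B\in L^1$. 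Hence $\dot{\mathcal E}\le\scp{F}{\tilde u}_H-\underline\lambda_2\norm{\tilde u}_H^2+g(t)\norm{\tilde u}_H^2$, and Gronwall's inequality with the $L^1$ perturbation $g$ and $F\in L^2$ delivers the exponential estimate for $\norm{\tilde u(t)}_H$, exactly as in Theorem~\ref{thm:stability}.

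Finally I would deduce convergence of $\tilde u_1$. Set $P(t):=\scp{u(t)}{B(t)e_1}_H$. The conservation identity gives $\dot P=\scp{u}{\dot B e_1}_H$, so $\abs{\dot P}\le\norm{u}_H\norm{\dot B e_1}_H\le C\sum_j\abs{\dot b_j}\in L^1(0,\infty)$ by the boundedness of $u$ and $\dot B\in L^1$; thus $P(t)\to P_\infty$. Moreover $q(t)=\tfrac1m\sum_j b_j(t)$ is nonincreasing and bounded below by $\varepsilon^2$, so $q(t)\to q_\infty\ge\varepsilon^2>0$, while $\gamma(t)=\scp{B e_1}{\tilde u}_H\to 0$ since $\tilde u(t)\to 0$. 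Writing $\alpha(t)=(P(t)-\gamma(t))/q(t)$ then gives $\alpha(t)\to\alpha_\infty:=P_\infty/q_\infty$, whence $\tilde u_1(t)=\alpha(t)e_1\to\alpha_\infty e_1$ in $H$, the announced equilibrium. The delicate point of the whole argument is the coercivity $\mathcal E\ge\tfrac{\beta^2}2\norm{\tilde u}_H^2$, which is exactly where the choice of the $B$-distance energy and the orthogonality $\tilde u\perp_H e_1$ pay off; everything else reduces to Gronwall estimates whose perturbations are $L^1$ in time by the monotonicity of the $b_j$.
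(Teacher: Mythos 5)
Your proof is correct, and although it follows the same overall strategy as the paper --- decompose $u$ along the constant eigenfunction $e_1$, use the uniform spectral gap $\underline\lambda_2$, exploit $\dot B\le 0$ in a weighted energy estimate, and then treat the two components separately --- the execution differs in two places. For the exponential decay of $\tilde u$, the paper never differentiates $\scp{B\tilde u}{\tilde u}_H$ at all: it estimates the full energy $\norm{B^{1/2}(t)u(t)}_H^2$, uses $\scp{F}{u}_H=\scp{F}{\tilde u}_H$ and $\fra(t,u,u)=\fra(t,\tilde u,\tilde u)$ so that only $\tilde u$ appears on the right-hand side, and bounds the left-hand side below by $\beta^2\norm{\tilde u(t)}_H^2$; this sidesteps entirely the cross term $-\dot\alpha\,\scp{Be_1}{\tilde u}_H$ that you cancel with the modified energy $\tfrac12\scp{B\tilde u}{\tilde u}_H-\gamma^2/(2q)$. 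Your device is heavier, but the cancellation is genuine (the identity $\dot\alpha q+\dot\gamma=\scp{\tilde u}{\dot Be_1}_H$ checks out) and it is more robust, e.g.\ it would degrade gracefully if $F$ were not orthogonal to $e_1$. For the convergence of $\tilde u_1$, the paper applies Lemma~\ref{lemma:bounded_almost_monotone} to $\norm{B^{1/2}(t)u(t)}_H^2$ and then recovers $\scp{u(t)}{e_1}_H$ from a Pythagoras-type identity together with a continuity argument; your route via $P(t)=\scp{u(t)}{B(t)e_1}_H$, whose derivative $\scp{u}{\dot Be_1}_H$ lies in $L^1(0,\infty)$ by the boundedness of $u$ and $\int_0^\infty\abs{\dot b_j}\,\d t=b_j(0)-\lim_{t\to\infty}b_j(t)<\infty$, is cleaner: $\alpha=(P-\gamma)/q\to P_\infty/q_\infty$ follows directly, with no sign ambiguity in passing from $\scp{u(t)}{e_1}_H^2$ to $\scp{u(t)}{e_1}_H$, and without the paper's identity $\norm{B^{1/2}u}_H^2=\scp{u}{e_1}_H^2\norm{B^{1/2}e_1}_H^2+\norm{B^{1/2}\tilde u}_H^2$, which as stated omits the cross term $2\scp{u}{e_1}_H\scp{Be_1}{\tilde u}_H$ (harmless there, since that term tends to zero, but your argument avoids the issue altogether). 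Both proofs ultimately rest on the two facts you isolate at the outset: the uniform gap $\lambda_2(t)\ge\underline\lambda_2$ and the integrability of $\dot b_j$ forced by monotonicity and the lower bound $b_j\ge\varepsilon^2$.
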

For the proof we need the following lemma.
\begin{lemma}\label{lemma:bounded_almost_monotone}
	Let $f\colon [0,\infty) \to [0,\infty)$, $g \in L^1(0,\infty)$ such that for $s \le t$
	\[
		f(t)-f(s) \le \int_s^t g(r) \ \d r.
	\]
	Then the limit $\lim_{t\to\infty}f(t)$ exists.
\end{lemma}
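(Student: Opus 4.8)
The plan is to reduce the statement to the elementary fact that a monotone function which is bounded on one side has a limit at infinity. The whole content of the hypothesis, once rearranged, is that $f$ differs from an absolutely continuous, convergent function by a monotone term. So first I would introduce the antiderivative
\[
	G(t) := \int_0^t g(r) \ \d r \quad (t \ge 0).
\]
Because $g \in L^1(0,\infty)$, the limit $G(\infty) := \int_0^\infty g(r)\,\d r$ exists and $G$ is bounded, with $\abs{G(t)} \le \norm{g}_{L^1(0,\infty)}$ for every $t \ge 0$. In particular $G$ itself converges as $t \to \infty$.

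The key observation is that the hypothesis can be rewritten in terms of $G$: for $s \le t$ we have $f(t) - f(s) \le \int_s^t g(r)\,\d r = G(t) - G(s)$, which rearranges to $f(t) - G(t) \le f(s) - G(s)$. Hence the function $h := f - G$ is \emph{non-increasing} on $[0,\infty)$. Moreover $h$ is bounded below, since $f \ge 0$ and $G$ is bounded gives $h(t) = f(t) - G(t) \ge -\norm{g}_{L^1(0,\infty)}$ for all $t$. A non-increasing, bounded-below function has a finite limit at infinity, so $\lim_{t\to\infty} h(t)$ exists.

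Finally I would recombine: writing $f = h + G$ and using that both $h(t)$ and $G(t)$ converge as $t \to \infty$, I conclude that $\lim_{t\to\infty} f(t)$ exists. There is essentially no obstacle in this argument; the only step requiring a moment's thought is spotting that subtracting the convergent antiderivative $G$ turns the almost-monotone estimate on $f$ into genuine monotonicity of $h$, after which the conclusion is immediate. (Note that nonnegativity of $f$ is used only to bound $h$ from below, which is exactly what is needed to force convergence rather than divergence to $-\infty$.)
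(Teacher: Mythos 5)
Your proof is correct, and it takes a different route from the paper's. You subtract the antiderivative $G(t)=\int_0^t g(r)\,\d r$ so that $h:=f-G$ becomes genuinely non-increasing and bounded below (by $-\norm{g}_{L^1(0,\infty)}$, using $f\ge 0$), then recombine $f=h+G$ with the convergence of $G$ coming from $g\in L^1(0,\infty)$. The paper instead works directly with the running infima $a_n=\inf\{f(t):t\ge n\}$: it shows these form a bounded increasing sequence with limit $a$, and then uses the almost-monotonicity estimate together with the tail bound $\int_n^\infty\abs{g(r)}\,\d r\to 0$ to squeeze $f(t)$ between $a_n$ and $a_n+\tfrac1n+\int_n^\infty\abs{g}$ for large $t$, so that $f(t)\to a$. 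Your reduction is the cleaner and more conceptual one --- it isolates exactly why the hypothesis forces convergence (a monotone perturbation of a convergent function) and dispenses with the subsequence/infimum bookkeeping; the paper's argument is more hands-on but has the minor virtue of identifying the limit explicitly as $\lim_n\inf_{t\ge n}f(t)$. Both use the nonnegativity of $f$ in the same essential way, namely to provide the one-sided bound that rules out divergence to $-\infty$.
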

\begin{proof}
	Define $a_n = \inf\{ f(t): t \ge n \}$.
	Then 
	\[
		a_1 \le a_2 \le \dots \le f(0) + \int_0^\infty \abs{g(r)}\ \d r,
	\] 
	hence $a:= \lim_{n\to\infty}a_n$ exists.
	Let $t_n \ge n$ such that $f(t_n) \le a_n +\frac 1 n$.
	Then for $t \ge t_n$ we have 
	\[
		a_n \le f(t) \le f(t_n) + \int_{t_n}^t g(r)\ \d r \le a_n +\frac 1 n + \int_n^\infty \abs{g(r)}\ \d r.
	\]
	Hence $\lim_{t\to\infty} f(t) = a$.
\end{proof}

\begin{proof}[Proof of Theorem \ref{theorem:exp_stability2}]
	Theorem~\ref{theorem:spectral_representation} shows that
	\[
		\fra(t,\tilde u(t),\tilde u(t)) \ge \lambda_2(t) \norm{\tilde u(t)}_H^2
	\]
	for a.e.\ $t\in [0,\infty)$. Let $\varepsilon >0$. Now by \eqref{eq:B_bounds}, the product rule, Young's inequality and the above estimate
	\begin{equation}\label{eq:norm_estimate2}\begin{split}	
		&\beta^2 \norm{\tilde u(t)}_H^2 -\beta^{-2} \norm{u(0)}_H^2 \le \beta^2 \norm{ u(t)}_H^2 - \beta^{-2}\norm{u(0)}_H^2\\
		&\quad\le \norm{B^{1/2}(t) u(t)}_H^2 - \norm{B^{1/2}(0) u(0)}_H^2\\
		&\quad= 2 \int_0^t  \scp{B(r)\dot u(r)}{u(r)}_H \ \d r + \int_0^t \scp{\dot B(r)u(r)}{u(r)}_H \ \d r\\
		&\quad\le 2 \int_0^t \scp{F(r)- A(r) u(r)}{u(r)}_H \ \d r\\
		&\quad= 2 \int_0^t \scp{F(r)}{u(r)}_H \ \d r - 2 \int_0^t \fra(r, u(r), u(r)) \ \d r\\
		&\quad= 2 \int_0^t \scp{F(r)}{\tilde u(r)}_H \ \d r - 2 \int_0^t \fra(r, \tilde u(r), \tilde u(r)) \ \d r\\
		&\quad\le \tfrac 1 {2\varepsilon} \int_0^t \norm{F(r)}_H^2 \ \d r + 2 \varepsilon \int_0^t \norm{\tilde u(r)}_H^2 \ \d r - 2 \int_0^t \lambda_2(r) \norm{\tilde u(r)}_H^2 \ \d r\\
		&\quad= \tfrac 1 {2\varepsilon} \int_0^t \norm{F(r)}_H^2 \ \d r -2 \int_0^t (\lambda_2(r)-\varepsilon) \norm{\tilde u(r)}_H^2 \ \d r.
	\end{split}\end{equation}
	By Gronwall's inequality we obtain
	\begin{equation}\label{eq:exp_stability2}
		\begin{split}
		\norm{\tilde u(t)}_H 
			&\le \Big(\tfrac 1{\beta^{2}} \norm{u^0}_H^2 +  \tfrac 1{2\varepsilon {\beta}}  \int_0^t \norm{F(r)}_H^2 \ \d r \Big)^{1/2} \exp\Big( \tfrac {-1}{{ \beta} }\int_0^t (\lambda_2(r)-\varepsilon) \ \d r \Big)\\
			&\le \Big(\tfrac1{\beta^2} \norm{u^0}_H^2 +  \tfrac 1{2\varepsilon  {\beta}} \int_0^t \norm{F(r)}_H^2 \ \d r \Big)^{1/2} \exp\Big( \tfrac{-t}{\beta}(\underline\lambda_2-\varepsilon) \Big)
		\end{split}
	\end{equation}
	for all $t \ge 0$. This shows exponential stability of $\tilde u$ if we choose $\varepsilon < \underline \lambda_2$.
	
	Let $0\le s \le t$. If we replace the lower integral limit $0$  in $\eqref{eq:norm_estimate2}$ by $s$ we obtain
	\begin{equation*}\begin{split}	
		&\norm{B^{1/2}(t) u(t)}_H^2 - \norm{B^{1/2}(s) u(s)}_H^2\\
		&\quad\le \tfrac 1 {2\varepsilon} \int_s^t \norm{F(r)}_H^2 \ \d r - 2 \int_s^t (\lambda_2(r)-\varepsilon) \norm{\tilde u(r)}_H^2 \ \d r.
	\end{split}\end{equation*}
	We insert $\eqref{eq:exp_stability2}$ in the above estimate and conclude that $\norm{B^{1/2}(t) u(t)}_H^2$ converges for $t \to \infty$ by Lemma~\ref{lemma:bounded_almost_monotone}.
	Since the functions $b_i(t)$ are monotone and bounded they converge as $t\to\infty$, thus also $B^{1/2}(t)$ converges as $t\to\infty$.
	Now 
	\begin{align*}
		\scp{u(t)}{e_1}_H^2 \norm{B^{1/2}(t) e_1}_H^2 + \norm{B^{1/2}(t) \tilde u(t)}_H^2 =  \norm{B^{1/2}(t) u(t)}_H^2
	\end{align*}
	together with $\eqref{eq:exp_stability2}$ shows that $\scp{u(t)}{e_1}_H^2$ converges as $t\to\infty$.
	Finally, since the function $\scp{u(\cdot)}{e_1}_H$ is continuous also $\scp{u(t)}{e_1}_H$ converges as $t\to\infty$,
	thus $\tilde u_1(t)=\scp{u(t)}{e_1}_H e_1$ converges in $H$ as $t\to\infty$.
\end{proof}

\subsection{Stability in the case $\dot b_i(t) \le 2 c b_i(t)$ for a $c < \underline\lambda_2$}\label{section:stab3}

In this subsection we consider the case that the functions $b_i$ are Lipschitz continuous, i.e.\ $b_i \in W^{1,\infty}(0,\infty)$, 
and that $\dot b_i(t) \le 2 c b_i(t)$ for some $c < \underline\lambda_2$ and almost all $t \in [0, \infty)$.
Again we denote by $u$ the unique solution of $\CP$.
In this situation we can still decompose the solution as $u=\tilde u_1 +\tilde u $ and show that the function $\tilde u$ is exponentially stable
and $\tilde u_1$ is a multiple of the first eigenfunction.
But we do not know further properties of $\tilde u_1$ like convergence as $t\to\infty$, even boundedness is not clear.

\begin{theorem}
	Let $u^0 \in V$ and $F \in L^2(0,\infty; H)$ with $\scp{F(t)}{e_1}_H = 0$ for a.e.\ $t \ge 0$. Then the solution $u$ of $\CP$ can be decomposed as $$u=\tilde u_1 +\tilde u, $$	
	where $\tilde u_1(t):= \frac{1}{m}\scp{u(t)}{\1}_H \1$ 	and  $\tilde u(\cdot)$ is exponentially stable.
\end{theorem}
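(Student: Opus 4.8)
The plan is to reduce this case to the already-established Theorem~\ref{theorem:exp_stability2} by an exponential change of variables that absorbs the (now possibly positive) contribution of $\dot B$, just as the substitution $v = e^{-c\cdot}u$ was used in the positivity proof. Concretely, I would set $v(t) := e^{-ct}u(t)$ and $\tilde F(t) := e^{-ct}F(t)$. Since $u$ solves $\CP$, a direct computation gives
\[
B(t)\dot v(t) + A(t)v(t) + cB(t)v(t) = \tilde F(t), \qquad v(0) = u^0.
\]
Because the orthogonal projection onto $e_1^\perp$ is linear and time-independent, the analogous splitting $\tilde v(t) := v(t) - \scp{v(t)}{e_1}_H e_1 = e^{-ct}\tilde u(t)$ holds, and $\scp{\tilde F(t)}{e_1}_H = e^{-ct}\scp{F(t)}{e_1}_H = 0$ for a.e.\ $t$.

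Next I would differentiate the energy $\norm{B^{1/2}(t)v(t)}_H^2$ exactly as in the proof of Theorem~\ref{theorem:exp_stability2} (the required absolute continuity follows from the regularity of $u$, hence of $v$). Using the transformed equation,
\[
2\scp{B(t)\dot v(t)}{v(t)}_H = 2\scp{\tilde F(t)}{v(t)}_H - 2\fra(t,v(t),v(t)) - 2c\norm{B^{1/2}(t)v(t)}_H^2,
\]
while the hypothesis $\dot b_i(t) \le 2cb_i(t)$ yields $\scp{\dot B(t)v(t)}{v(t)}_H \le 2c\norm{B^{1/2}(t)v(t)}_H^2$. The key point is that these two $\pm 2c\norm{B^{1/2}(t)v(t)}_H^2$ terms cancel. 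Invoking then $\scp{\tilde F}{v}_H = \scp{\tilde F}{\tilde v}_H$ (since $\scp{\tilde F}{e_1}_H = 0$), the identity $\fra(t,v,v) = \fra(t,\tilde v,\tilde v)$ (from $A(t)e_1 = 0$), the lower bound $\fra(t,\tilde v,\tilde v) \ge \lambda_2(t)\norm{\tilde v}_H^2 \ge \underline\lambda_2 \norm{\tilde v}_H^2$, and Young's inequality, I arrive at precisely the integral inequality \eqref{eq:norm_estimate2} with $u,\tilde u,F$ replaced by $v,\tilde v,\tilde F$. Gronwall's inequality then gives the analogue of \eqref{eq:exp_stability2}, namely
\[
\norm{\tilde v(t)}_H \le \Big(\tfrac1{\beta^2}\norm{u^0}_H^2 + \tfrac1{2\varepsilon\beta}\int_0^t e^{-2cr}\norm{F(r)}_H^2\ \d r\Big)^{1/2}\exp\Big(-\tfrac{t}{\beta}(\underline\lambda_2 - \varepsilon)\Big).
\]

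Finally I would undo the substitution via $\norm{\tilde u(t)}_H = e^{ct}\norm{\tilde v(t)}_H$ and check that the net exponential rate is negative. Fixing $\varepsilon > 0$ with $c < \underline\lambda_2 - \varepsilon$ (possible since $c < \underline\lambda_2$), one has $c < \tfrac1\beta(\underline\lambda_2 - \varepsilon)$ because $\beta \in (0,1)$ and $\underline\lambda_2 - \varepsilon > 0$; distinguishing the cases $c \le 0$ and $c > 0$ to bound $e^{2ct}\int_0^t e^{-2cr}\norm{F(r)}_H^2\ \d r = \int_0^t e^{2c(t-r)}\norm{F(r)}_H^2\ \d r$ by a constant multiple of $\norm{F}_{L^2(0,\infty;H)}^2$ (times $e^{2ct}$ in the case $c>0$), the right-hand side decays like $\exp\!\big(t(c - \tfrac1\beta(\underline\lambda_2 - \varepsilon))\big)$, which proves exponential stability of $\tilde u$. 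The main obstacle is recognizing that the possibly-positive $\dot B$ term cannot simply be discarded as in Section~\ref{section:stab2}; the exponential substitution is exactly what converts the sign condition $\dot b_i \le 2cb_i$ into the cancellation that restores the favourable energy estimate, and the bookkeeping of the two competing rates $c$ and $\tfrac1\beta(\underline\lambda_2 - \varepsilon)$ is where the constraint $c < \underline\lambda_2$ is used.
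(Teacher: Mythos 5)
Your proposal is correct and follows essentially the same route as the paper's own proof: the substitution $v = e^{-c\cdot}u$, the transformed equation in which the hypothesis $\dot b_i \le 2cb_i$ makes the $\dot B - 2cB$ term nonpositive, the energy estimate for $\norm{B^{1/2}(t)v(t)}_H^2$ combined with the spectral gap $\lambda_2(t)\ge\underline\lambda_2$, Gronwall, and then undoing the substitution. If anything, your final bookkeeping of the competing rates $c$ and $\tfrac1\beta(\underline\lambda_2-\varepsilon)$ and of the term $\int_0^t e^{-2cr}\norm{F(r)}_H^2\,\d r$ is carried out more carefully than in the paper, which states the final bound somewhat loosely.
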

\begin{proof}
	We set $v(\cdot):= e^{-c \cdot} u(\cdot)$ and $\tilde F(\cdot) = e^{- c \cdot} F(\cdot)$, then since $u$ is a solution of $\CP$
	\begin{align*}
		B(t) \dot v(t) &= e^{-c t} (B(t) \dot u(t)-c B(t) u(t))\\
			&= \tilde F(t) - A(t) v(t) - c B(t) v(t).
	\end{align*}
	Let $\tilde v(\cdot) := v(\cdot)-\scp{v(\cdot)}{e_1}_H e_1$.
	Theorem~\ref{theorem:spectral_representation} shows that
	\[
		\fra(t,\tilde v(t),\tilde v(t)) \ge \lambda_2(t) \norm{\tilde v(t)}_H^2
	\]
	for a.e.\ $t\in [0,\infty)$. Let $\varepsilon >0$. Now by \eqref{eq:B_bounds}, the product rule and Young's inequality, we obtain for $t \ge 0$ the estimate
	\begin{align*}
		&\beta^2 \norm{\tilde v(t)}_H^2 -\beta^{-2} \norm{u(0)}_H^2 \le \beta^2 \norm{v(t)}_H^2 - \beta^{-2}\norm{v(0)}_H^2\\
		&\quad\le \norm{B^{1/2}(t) v(t)}_H^2 - \norm{B^{1/2}(0) v(0)}_H^2\\
		&\quad= 2 \int_0^t  \scp{B(s)\dot v(s)}{v(s)}_H \ \d s + \int_0^t \scp{\dot B(s)v(s)}{v(s)}_H \ \d s\\
		&\quad= 2 \int_0^t \scp{\tilde F(s)- A(s) v(s)}{v(s)}_H\ \d s + \int_0^t \scp{(\dot B(s) - 2cB(s)) v(s)}{v(s)}_H\ \d s\\
		&\quad\le 2 \int_0^t \scp{\tilde F(s)}{v(s)}_H \ \d s - 2 \int_0^t \fra(s, v(s), v(s)) \ \d s\\
		&\quad= 2 \int_0^t \scp{\tilde F(s)}{\tilde v(s)}_H \ \d s - 2 \int_0^t \fra(s, \tilde v(s), \tilde v(s)) \ \d s\\
		&\quad\le \tfrac 1 {2\varepsilon} \int_0^t \norm{\tilde F(s)}_H^2 \ \d s + 2 \varepsilon \int_0^t \norm{\tilde v(s)}_H^2 \ \d s - 2 \int_0^t \lambda_2(s) \norm{\tilde v(s)}_H^2 \ \d s\\
		&\quad= \tfrac 1 {2\varepsilon} \int_0^t \norm{\tilde F(s)}_H^2 \ \d s -2 \int_0^t (\lambda_2(s)-\varepsilon) \norm{\tilde v(s)}_H^2 \ \d s.
	\end{align*}
	By Gronwall's inequality we obtain
	\begin{align*}
		\norm{\tilde v(t)}_H 
			&\le \Big(\tfrac{1}{ \beta^2}\norm{u^0}_H^2 + \tfrac 1 {2\varepsilon {\beta}} \int_0^t \norm{\tilde F(s)}_H^2 \ \d s \Big)^{1/2} \exp\Big(\!\tfrac {-1}{{ \beta} }\int_0^t (\lambda_2(s)-\varepsilon) \ \d s \Big)\\
			&\le \Big(\tfrac{1}{ \beta^2}\norm{u^0}_H^2 + \tfrac 1 {2\varepsilon {\beta}}  \int_0^t \norm{\tilde F(s)}_H^2 \ \d s \Big)^{1/2} \exp\Big(\!- \tfrac {t}{{ \beta} }(\underline\lambda_2-\varepsilon) \Big)
	\end{align*}
	for all $t \ge 0$. Thus 
	\begin{equation*}
		\norm{\tilde u(t)}_H \le \Big(\tfrac{1}{ \beta^2}\norm{u^0}_H^2 + \tfrac 1 {2\varepsilon {\beta}}\int_0^t \norm{\tilde F(s)}_H^2 \ \d s \Big)^{1/2} \exp\Big(\!- \tfrac {t}{{ \beta} } (\underline\lambda_2 - c -\varepsilon) \Big)
	\end{equation*}
	for all $t \ge 0$. This shows exponential stability of $\tilde u$ if we choose $\varepsilon < \underline \lambda_2 - c$.
\end{proof}

\section{Appendix}
In this appendix, based on Kato's monograph \cite{Kat66}, we prove continuous dependency of the $k$-th eigenvalue as we need it in Section~\ref{section:stability}.
This result is folklore, but we were not able to find a reference.

Let $H$ be a separable complex Hilbert space of infinite dimension. 
Suppose $A$ is an unbounded self-adjoint operator on $H$ with $\sigma(A) \subset [\omega_0, \infty)$ 
and compact resolvent (i.e.\ $R(\omega, A)=(\omega-A)^{-1}$ is a compact operator on $H$ for some $\omega < \omega_0$).
Then by the Spectral Theorem, the space $H$ has an orthonormal basis $\{ e_k : k\in \N \}$ such that $e_k \in D(A)$, $Ae_k = \lambda_k e_k$
where $ \omega_0 \le \lambda_1 \le \dots \le \lambda_k \le \lambda_{k+1}\le \dots$ and $\lim_{k\to\infty} \lambda_k =\infty$.
As a consequence $\{ \lambda_k : k \in\N \}$ is exactly the set of all eigenvalues of $A$ and hence $\lambda_k$ is the $k$-th eigenvalue of $A$ counting multiplicity.
For  $n \in \N$ let $A_n$ be a further unbounded self-adjoint operator on $H$ with $\sigma(A_n) \subset [\omega_0, \infty)$ and compact resolvent.
We denote the $k$-th eigenvalue of the operator $A_n$ by $\lambda_k^n$.
\begin{theorem}\label{theorem:convergence_of_eigenvalues}
	Suppose $\|R(\omega, A_n) - R(\omega,A)\|_{\L(H)}\to 0$ as ${n\to\infty} $ for some $\omega < \omega_0$. Then $\lim_{n\to\infty} \lambda_k^n = \lambda_k$ for every $k\in\N$.
\end{theorem}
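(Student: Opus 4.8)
The plan is to convert the statement about the unbounded operators $A_n$, $A$ into a statement about their resolvents, which are compact positive self-adjoint operators, and then to invoke the Courant--Fischer min-max characterization. First I would fix $\omega < \omega_0$ as in the hypothesis and pass to the operators $T := (A-\omega)^{-1} = -R(\omega,A)$ and $T_n := (A_n-\omega)^{-1} = -R(\omega,A_n)$. Since $\sigma(A), \sigma(A_n) \subseteq [\omega_0,\infty)$ and $\omega < \omega_0$, each of $A-\omega$, $A_n-\omega$ is self-adjoint with spectrum in $[\omega_0-\omega,\infty) \subseteq (0,\infty)$; hence $T, T_n$ are bounded, self-adjoint, \emph{positive}, and compact by assumption. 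The hypothesis $\norm{R(\omega,A_n)-R(\omega,A)}_{\L(H)} \to 0$ is exactly $\norm{T_n-T}_{\L(H)} \to 0$.

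Next I would record the order-reversing correspondence of the spectra. The operator $T$ has the same eigenvectors $e_k$ as $A$, with eigenvalues $\nu_k := (\lambda_k-\omega)^{-1}$. The map $\lambda \mapsto (\lambda-\omega)^{-1}$ is a strictly decreasing bijection of $(\omega,\infty)$ onto $(0,\infty)$ preserving multiplicities, so when the eigenvalues of $T$ are listed in decreasing order, $\nu_1 \ge \nu_2 \ge \dots > 0$ with $\nu_k \to 0$, the $k$-th eigenvalue of $A$ counting multiplicity is recovered as $\lambda_k = \omega + \nu_k^{-1}$. The same holds for $A_n$: $\lambda_k^n = \omega + (\nu_k^n)^{-1}$, where $\nu_k^n$ is the $k$-th largest eigenvalue of $T_n$. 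Because $A_n$ has infinitely many eigenvalues tending to $\infty$, the operator $T_n$ has infinitely many positive eigenvalues, so $\nu_k^n > 0$ for every $k$.

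The core step is to prove $\nu_k^n \to \nu_k$. For a bounded self-adjoint operator $S$ define the Courant--Fischer value
$$\mu_k(S) := \max_{\substack{L \subseteq H \\ \dim L = k}} \ \min_{\substack{u \in L,\ \|u\|_H = 1}} \scp{Su}{u}_H.$$
For a positive compact self-adjoint operator possessing infinitely many positive eigenvalues, $\mu_k(S)$ equals its $k$-th largest eigenvalue counting multiplicity, so $\mu_k(T)=\nu_k$ and $\mu_k(T_n)=\nu_k^n$. Since $\abs{\scp{T_n u}{u}_H - \scp{Tu}{u}_H} \le \norm{T_n-T}_{\L(H)}$ for every unit vector $u$, taking the min over the same subspace and then the max over $k$-dimensional subspaces yields the Lipschitz bound $\abs{\nu_k^n - \nu_k} = \abs{\mu_k(T_n)-\mu_k(T)} \le \norm{T_n-T}_{\L(H)} \to 0$. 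Finally, as $\nu_k > 0$ and $\nu_k^n \to \nu_k$, continuity of $\nu \mapsto \omega + \nu^{-1}$ at $\nu_k$ gives $\lambda_k^n = \omega + (\nu_k^n)^{-1} \to \omega + \nu_k^{-1} = \lambda_k$, which is the claim.

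I expect the main obstacle to be bookkeeping rather than a genuine difficulty: one must verify that $\mu_k(S)$ really coincides with the $k$-th largest eigenvalue counting multiplicity for our operators, so that it does not inadvertently return the accumulation value $0$ of the essential spectrum, and that the index $k$ matches on both sides of the order-reversing map $\lambda \leftrightarrow \nu$. Both points are secured here by the uniform lower bound $\sigma(A_n) \subseteq [\omega_0,\infty)$ together with $\lambda_k^n \to \infty$, which force each $T_n$ to have infinitely many strictly positive eigenvalues with the correct multiplicities.
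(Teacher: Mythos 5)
Your proof is correct, but it follows a genuinely different route from the paper's. The paper reduces the theorem to a combinatorial lemma on eigenvalue sequences (Lemma~\ref{lemma:convergency_of_eigenvalues}), verifies its condition b) by a compactness argument on eigenvectors (Lemma~\ref{lemma:resolvent_property}), and verifies condition a) by comparing the Riesz spectral projections $P_n=\frac{1}{2\pi i}\int_\Gamma R(\omega,A_n)\,\d\omega$ with $P$ and invoking Kato's result that two projections at distance less than $1$ have equal rank. You instead pass to the compact positive self-adjoint operators $T=(A-\omega)^{-1}$, $T_n=(A_n-\omega)^{-1}$ and apply the Courant--Fischer max-min characterization, obtaining the Weyl-type Lipschitz bound $\abs{\nu_k^n-\nu_k}\le\norm{T_n-T}_{\L(H)}$ before transferring back through the order-reversing map $\lambda\mapsto(\lambda-\omega)^{-1}$. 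Your bookkeeping is sound: injectivity and positivity of the resolvents guarantee infinitely many positive eigenvalues, so the max-min value never degenerates to the accumulation point $0$ of the essential spectrum, and the uniform bound $\sigma(A_n)\subset[\omega_0,\infty)$ keeps the $\nu_k^n$ in the range where $\nu\mapsto\omega+\nu^{-1}$ is continuous. What your approach buys is brevity and a quantitative estimate: an explicit modulus of continuity for the resolvent eigenvalues, hence for each $\lambda_k$ locally, with no contour integrals and no appeal to Kato. What the paper's projection argument buys is robustness: it tracks clusters of eigenvalues directly and extends to settings (non-symmetric or holomorphic perturbations) where a variational characterization is unavailable, whereas your argument leans essentially on self-adjointness and the ordering of the spectrum.
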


We show two auxiliary results before proving the theorem. 

\begin{lemma}\label{lemma:convergency_of_eigenvalues}
Let $\omega_0\le\mu_1<\dots<\mu_p<\mu_{p+1}<\dots$ be all different eigenvalues of $A$, $\mu_p$ having multiplicity $m_p \in\N$, and let $\omega_0 \le \lambda_k^n \le \lambda_{k+1}^n \le \dots$ be such that the following two conditions hold.
	\begin{enumerate}[label={\rm \alph*)}]
		\item For each $p\in\N$, $\varepsilon >0$ such that $\mu_{p-1} + \varepsilon < \mu_p < \mu_{p+1} - \varepsilon$ there exists a $n_0 \in\N$ such that
			for all $n\ge n_0$ one has
			\[
				\#\{ k : \lambda_k^n \in [\mu_p-\varepsilon, \mu_p+\varepsilon] \} = m_p.
			\]
		\item For all $k\in\N$ each limit point of $(\lambda_k^n)_{n\in\N}$ is in $\{ \mu_p : p\in\N \}$.
	\end{enumerate}
	Then $\lim_{n\to\infty} \lambda_k^n = \lambda_k$ for all $k\in\N$ where $\lambda_k =\mu_p$ for $M_{p-1}< k \le M_p$,
	$M_p = m_1+\dots + m_p$ for $p\in\N$ and $M_0=0$.
\end{lemma}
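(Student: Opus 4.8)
The plan is to convert the two hypotheses into a single precise \emph{counting} statement and then read off the convergence from it. The key elementary observation is that, since $\lambda_1^n \le \lambda_2^n \le \cdots$ is nondecreasing, for any interval $I$ the indices $k$ with $\lambda_k^n \in I$ form a block of consecutive integers; in particular, the number of eigenvalues at or below a threshold determines an initial segment of indices, so counting is as good as locating.

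The central step I would establish is the following counting claim: \emph{for every} $P \in \N$ \emph{and every sufficiently small} $\varepsilon > 0$ (so small that the intervals $[\mu_q - \varepsilon, \mu_q + \varepsilon]$, $q \le P$, are pairwise disjoint and $\mu_P + \varepsilon < \mu_{P+1}$) \emph{there is an} $n_0$ \emph{such that for all} $n \ge n_0$
\[
\#\{ k : \lambda_k^n \le \mu_P + \varepsilon \} = M_P .
\]
The lower bound $\ge M_P$ is immediate from hypothesis a): applying it to each $q = 1, \dots, P$ and taking the largest of the finitely many resulting thresholds yields $m_q$ eigenvalues in each $[\mu_q-\varepsilon,\mu_q+\varepsilon]$, hence $M_P = \sum_{q\le P} m_q$ eigenvalues $\le \mu_P + \varepsilon$. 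The upper bound $\le M_P$ is the heart of the matter and is where hypothesis b) enters. I would argue by contradiction: if along a subsequence $(n_l)$ there were at least $M_P + 1$ eigenvalues $\le \mu_P + \varepsilon$, then $\lambda_{M_P+1}^{n_l} \le \mu_P+\varepsilon$, so the values $\lambda_1^{n_l}, \dots, \lambda_{M_P+1}^{n_l}$ all lie in $[\omega_0, \mu_P+\varepsilon]$. Since a) permits only $M_P$ eigenvalues in $\bigcup_{q\le P}[\mu_q-\varepsilon,\mu_q+\varepsilon]$, by pigeonhole some index $j_l \in \{1,\dots,M_P+1\}$ satisfies $\lambda_{j_l}^{n_l} \in G$, where $G := [\omega_0,\mu_P+\varepsilon] \setminus \bigcup_{q\le P}[\mu_q-\varepsilon,\mu_q+\varepsilon]$ is compact.

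Now comes the decisive use of b). Because $j_l$ ranges over the \emph{finite} set $\{1,\dots,M_P+1\}$, I may pass to a further subsequence on which $j_l \equiv j^*$ is constant; then $(\lambda_{j^*}^{n_l})_l$ lies in the compact set $G$ and so has a limit point in $G$. By hypothesis b) this limit point is some $\mu_q$, yet by construction $G$ contains no $\mu_q$ whatsoever (those with $q \le P$ were removed, and $\mu_q \ge \mu_{P+1} > \mu_P + \varepsilon$ for $q > P$). This contradiction proves the upper bound, hence the counting claim. I expect this to be the main obstacle: ruling out ``spurious'' eigenvalues drifting into the gaps requires passing from the index-free information in b) to a \emph{fixed} index, and the mechanism that makes this possible is exactly the a priori bound $j_l \le M_P+1$ together with compactness of $G$.

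Finally I would assemble the conclusion. Fix $k$ and let $p$ be determined by $M_{p-1} < k \le M_p$, so $\lambda_k = \mu_p$. Applying the counting claim at levels $P = p$ and $P = p-1$ gives, for $n$ large, exactly $M_p$ eigenvalues $\le \mu_p+\varepsilon$ and exactly $M_{p-1}$ eigenvalues $\le \mu_{p-1}+\varepsilon$, hence exactly $m_p$ eigenvalues in $(\mu_{p-1}+\varepsilon, \mu_p+\varepsilon]$. Since a) already places $m_p$ eigenvalues in $[\mu_p-\varepsilon,\mu_p+\varepsilon] \subseteq (\mu_{p-1}+\varepsilon,\mu_p+\varepsilon]$, these two blocks of size $m_p$ must coincide; consequently the eigenvalues with indices $M_{p-1}+1, \dots, M_p$ are precisely those lying in $[\mu_p-\varepsilon,\mu_p+\varepsilon]$. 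In particular $\abs{\lambda_k^n - \mu_p} \le \varepsilon$ for all large $n$. As $\varepsilon>0$ was arbitrary, $\lambda_k^n \to \mu_p = \lambda_k$, which is the assertion. The case $p=1$ is identical, reading the level $P=0$ as $M_0 = 0$.
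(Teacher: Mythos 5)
Your argument is correct, but it is organized differently from the paper's. The paper proceeds by induction on $p$: the base case combines the bound $\lambda_{M_1}^n\le\mu_1+\varepsilon$ from a) with b); in the inductive step the hypothesis for $p-1$ pins $\lambda_{M_{p-1}}^n$ below $\mu_p-\varepsilon$, condition a) then forces $\lambda_{M_p}^n\le\mu_p+\varepsilon$, and b) identifies $\mu_p$ as the only possible limit point for the indices $M_{p-1}<k\le M_p$. You instead prove, non-inductively, the two-sided counting identity $\#\{k:\lambda_k^n\le\mu_P+\varepsilon\}=M_P$ for large $n$, with the lower bound from a) and the upper bound by pigeonhole: a spurious eigenvalue would have to sit, at a \emph{fixed} index $j^*\le M_P+1$ along a subsequence, in the gap set $G$, and b) then yields a contradiction. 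This buys you an explicit treatment of the one point the paper's proof leaves terse, namely why no eigenvalue can persist in the spectral gaps; the price is the extra subsequence extraction. One cosmetic repair: $G=[\omega_0,\mu_P+\varepsilon]\setminus\bigcup_{q\le P}[\mu_q-\varepsilon,\mu_q+\varepsilon]$ is bounded but not closed, so it is not compact; the limit point of $(\lambda_{j^*}^{n_l})_l$ lies a priori only in the closure $\overline{G}$. Since every point of $G$ has distance greater than $\varepsilon$ from each $\mu_q$ with $q\le P$ and distance at least $\mu_{P+1}-\mu_P-\varepsilon>0$ from each $\mu_q$ with $q>P$, the closure $\overline{G}$ still contains no $\mu_q$, and your contradiction stands unchanged.
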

\begin{proof}
	The proof is given by induction over $p$.
	Let $p=1$ and let $\varepsilon >0$ such that $\mu_1+\varepsilon <\mu_2$.
	By a) there exists a $n_0$ such that $\lambda_{M_1}^n \le \mu_1+\varepsilon$ for all $n \ge n_0$.
	Thus for $k=1,\dots,M_1$, the sequence $(\lambda_k^n)_{n\in\N}$ is bounded and by b) it has $\mu_1$ as the only possible limit point.
	Hence $\lim_{n\to\infty}\lambda_k^n = \mu_1$ for $k=1,\dots,M_1$.

	Now let $p\in\N$, $p\ge 2$ such that the assertion of the lemma holds for $p-1$.
	Let $\varepsilon >0$ such that $\mu_{p-1}<\mu_p-\varepsilon<\mu_p<\mu_p+\varepsilon<\mu_{p+1}$.
	By the inductive hypothesis there exists $n_0\in\N$ such that for all $n\ge n_0$, $\lambda_{M_{p-1}}^n<\mu_p-\varepsilon$.
	It follows from the assumption a) that there exists $n_1 \ge n_0$ such that $\lambda_{M_p}^n<\mu_p +\varepsilon$ for all $n\ge n_1$.
	Hence the only possible limit point of $(\lambda_k^n)_{n\in\N}$ is $\mu_p$ whenever $M_{p-1}< k \le M_p$.
	Thus $\lim_{n\to\infty}\lambda_k^n = \mu_p$ for $M_{p-1}< k \le M_p$.
\end{proof}

\begin{lemma}\label{lemma:resolvent_property}
	Let $\omega < \omega_0$ such that $\|R(\omega, A_n) - R(\omega,A)\|_{\L(H)}\to 0$ as $n\to \infty$.
	If $\mu_n \in \sigma(A_n)$ for $n \in \N$ and $\mu_n \to \mu$ as $n \to \infty$, then $\mu \in \sigma(A)$.
\end{lemma}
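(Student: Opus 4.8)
The plan is to transfer the statement from the unbounded operators $A_n,A$ to their (compact, self-adjoint) resolvents, where norm convergence is available by hypothesis, and then to invoke the upper semicontinuity of the spectrum under operator-norm convergence. The bridge in both directions is the spectral mapping theorem for resolvents of self-adjoint operators.

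First I would set up the reduction. Since $\sigma(A_n)\subset[\omega_0,\infty)$ we have $\mu_n\ge\omega_0>\omega$, hence $\omega-\mu_n\le\omega-\omega_0<0$; consequently $\nu_n:=(\omega-\mu_n)^{-1}$ and $\nu:=(\omega-\mu)^{-1}$ are well-defined, nonzero, and satisfy $|\nu_n|\le(\omega_0-\omega)^{-1}$, with $\nu_n\to\nu$ as $n\to\infty$. By the spectral mapping theorem for the resolvent, for each closed operator with $\omega$ in its resolvent set the map $\lambda\mapsto(\omega-\lambda)^{-1}$ is a bijection of $\sigma(\cdot)$ onto $\sigma(R(\omega,\cdot))\setminus\{0\}$. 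Thus $\mu_n\in\sigma(A_n)$ gives $\nu_n\in\sigma(R(\omega,A_n))$, and conversely it will suffice to prove $\nu\in\sigma(R(\omega,A))$, since this yields $\mu\in\sigma(A)$ (note $\nu\ne0$, so the exceptional spectral value $0$ plays no role).

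Next I would prove the semicontinuity step. Write $T_n:=R(\omega,A_n)$ and $T:=R(\omega,A)$, so that $\|T_n-T\|_{\L(H)}\to0$ by hypothesis, and $\nu_n\in\sigma(T_n)$ with $\nu_n\to\nu$. Suppose, for contradiction, that $\nu\notin\sigma(T)$; then $T-\nu I$ is boundedly invertible. Using the factorization
\[
T_n-\nu_n I=(T-\nu I)\bigl[I+(T-\nu I)^{-1}\bigl((T_n-T)+(\nu-\nu_n)I\bigr)\bigr],
\]
the perturbation term has norm at most $\|(T-\nu I)^{-1}\|\,(\|T_n-T\|+|\nu-\nu_n|)\to0$, so for $n$ large it is $<1$. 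A Neumann series then shows the bracketed factor is invertible, whence $T_n-\nu_n I$ is invertible, i.e.\ $\nu_n\notin\sigma(T_n)$, contradicting $\nu_n\in\sigma(T_n)$. Therefore $\nu\in\sigma(T)$, and the reduction above gives $\mu\in\sigma(A)$.

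The routine part is the Neumann-series perturbation argument, which is robust precisely because the convergence $T_n\to T$ is in operator norm rather than merely strong. The only point requiring a little care is the spectral mapping correspondence: one must ensure that the transformed spectral values $\nu_n,\nu$ remain bounded and bounded away from $0$, which is exactly what the uniform lower bound $\mu_n,\mu\ge\omega_0>\omega$ delivers. In essence the lemma is the assertion that the spectrum is upper semicontinuous under norm-resolvent convergence, and the proof above is simply the concrete realization of that principle in the present self-adjoint setting.
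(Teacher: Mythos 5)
Your argument is correct, but it follows a genuinely different route from the paper's. The paper works directly with eigenvectors: since $A_n$ has compact resolvent, $\mu_n$ is an eigenvalue with a normalized eigenvector $e_n$, and the identity $e_n=(\omega-\mu_n)(R(\omega,A_n)-R(\omega,A))e_n+(\omega-\mu_n)R(\omega,A)e_n$ together with the \emph{compactness} of $R(\omega,A)$ lets one extract a subsequence along which $R(\omega,A)e_n$, and hence $e_n$ itself, converges to a unit vector $e$ satisfying $Ae=\mu e$. You instead pass to the bounded operators $T_n=R(\omega,A_n)$, $T=R(\omega,A)$ via the spectral mapping theorem for resolvents and invoke upper semicontinuity of the spectrum under operator-norm convergence, with the Neumann-series factorization supplying the contradiction; the uniform bound $\mu_n\ge\omega_0>\omega$ correctly keeps the transformed values $\nu_n$ bounded and $\nu$ away from the exceptional value $0$. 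Your proof is the more general one: it never uses compactness of the resolvents nor self-adjointness, so it establishes upper semicontinuity of the spectrum for arbitrary closed operators under norm-resolvent convergence. What the paper's argument buys in exchange is slightly more information, namely an explicit eigenvector of $A$ for the eigenvalue $\mu$ (which in the compact-resolvent setting is of course equivalent to $\mu\in\sigma(A)$, but is obtained constructively). Both proofs are complete and correct.
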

\begin{proof}
	Since $\mu_n \in \sigma(A_n)$, exist eigenvectors $e_n$ such that $A_n e_n = \mu_n e_n$ for all $n \in \N$ and $\lVert e_n \rVert_H = 1$.
	We get
	\begin{equation}\label{eq:proof_resolvent_property}
	\begin{split}
		e_n &= R(\omega,A_n)(\omega-A_n) e_n = R(\omega,A_n)(\omega-\mu_n) e_n\\
			&= (\omega-\mu_n)(R(\omega, A_n) - R(\omega, A)) e_n +  (\omega-\mu_n)R(\omega, A) e_n
	\end{split}
	\end{equation}
	for all $n\in\N$. 
	Since $R(\omega, A)$ is compact and $\norm{e_n}_H=1$, after taking a subsequence we may assume that $R(\omega, A)e_n$ converges.
	This, the convergence of the resolvent and $\eqref{eq:proof_resolvent_property}$ imply that also $e_n$ converges to some $e$ as $n\to\infty$ and
	that $e = (\omega-\mu)R(\omega, A) e$ or $A e = \mu e$. 
	Thus $\mu \in \sigma(A)$.
\end{proof}

\begin{proof}[Proof of Theorem~\ref{theorem:convergence_of_eigenvalues}]
We apply Lemma~\ref{lemma:convergency_of_eigenvalues} to $\lambda_k, \lambda_k^n,\mu_p,m_p$ defined above.
Condition b) is satisfied by Lemma~\ref{lemma:resolvent_property}.
Next we want to prove condition a).
Let $p \in \N$ and $\varepsilon > 0$ such that $\mu_{p-1} + \varepsilon < \mu_p < \mu_{p+1} - \varepsilon$.
It follows from b) that there exists $n_0\in\N$ such that $\mu_{p-1} + \varepsilon, \mu_{p+1} - \varepsilon \in \rho(A)$ for all $n \ge n_0$.
For $n \ge n_0$ we denote by
\begin{equation}
	P_n:= \frac 1 {2\pi i}\int_{\Gamma} R(\omega,A_n) \ \d\omega
\end{equation}
the spectral projection associated with $(\mu_{p-1} + \varepsilon, \mu_{p+1} - \varepsilon) \cap \sigma(A)$.
Here $\Gamma$ is the circle with center $\frac 1 2 (\mu_{p-1} + \mu_{p+1})$ and radius $r = \frac 1 2 (\mu_{p+1} - \mu_{p-1}) - \varepsilon$.
Similarly, let
\begin{equation}
	P:= \frac 1 {2\pi i} \int_\Gamma R(\omega,A) \ \d\omega.
\end{equation}
Then $\norm{P_n-P}_{\L(H)}\to 0$ as $n \to \infty$.
Hence there exists $n_1 \ge n_0$ such that $\norm{P_n-P}_{\L(H)} < 1$ for all $n \ge n_1$.
Consequently $P_n$ and $P$ have the same dimension \cite[I.\ Theorem 6.32]{Kat66}.
This shows that condition a) holds.
Now the assertion of Theorem~\ref{theorem:convergence_of_eigenvalues} follows from Lemma~\ref{lemma:convergency_of_eigenvalues}.
\end{proof}

\subsection*{Acknowledgment}

The authors are grateful to Delio Mugnolo for many stimulating discussions. Dominik Dier  is a member of the DFG Graduate School 1100: Modelling, Analysis and Simulation in Economathematics. Marjeta Kramar Fijav\v{z} acknowledges a research stay at the  University of Ulm financed by the Faculty of Mathematics and Economics and the German Academic Exchange Service (DAAD).


\noindent
\emph{Wolfgang Arendt}, \emph{Dominik Dier},\\
Institute of Applied Analysis, 
University of Ulm, 89069 Ulm, Germany,\\
\texttt{wolfgang.arendt@uni-ulm.de}, \texttt{dominik.dier@uni-ulm.de}

\medskip\noindent
\emph{Marjeta Kramar Fijav\v{z}}, \\University of Ljubljana, Faculty of Civil and Geodetic Engineering, Jamova 2, SI-1000 Ljubljana, Slovenia \\
and\\
Institute of Mathematics, Physics, and Mechanics,
Jadranska 19, SI-1000 Ljubljana, Slovenia\\
\texttt{marjeta.kramar@fgg.uni-lj.si}
\end{document}